\documentclass[12pt,reqno,a4paper]{amsart}
\usepackage{hyperref}
\usepackage{enumerate}
\usepackage[left=0.7in, right=0.7in, top= 0.9in, bottom=1.65in]{geometry}
\usepackage{amsmath,amssymb,amsthm,amsfonts}
\setlength{\parindent}{0pt}
\setlength{\parskip}{0.45em}

 \linespread{1.15}

\usepackage{graphicx}
\usepackage{tikz}

\usetikzlibrary{decorations.pathreplacing}
\usetikzlibrary{decorations.markings}
\usetikzlibrary{calc} 

\usetikzlibrary{graphs, graphs.standard}

\tikzset{
	modal/.style={>=stealth’,shorten >=1pt,shorten <=1pt,auto,node distance=1.5cm,
		semithick},
	world/.style={circle, draw,minimum size=.1cm,fill=gray!15},
	point/.style={circle,draw,inner sep=0.3mm,fill=black},
	circ/.style={circle,draw,inner sep=0.1mm,fill=white},
	reflexive above/.style={->,loop,looseness=7,in=120,out=60},
	reflexive below/.style={->,loop,looseness=7,in=240,out=300},
	reflexive left/.style={->,loop,looseness=7,in=150,out=210},
	reflexive right/.style={->,loop,looseness=7,in=30,out=330}
}

\usetikzlibrary{shapes}
\usetikzlibrary{plotmarks}
\usetikzlibrary{arrows}
\usetikzlibrary{positioning}
\theoremstyle{definition}
\newtheorem{defn}{Definition}[section]
\newtheorem{fact}[defn]{Fact}
\newtheorem{prop}[defn]{Proposition}
\newtheorem{thm}[defn]{Theorem}

\newtheorem{corr}[defn]{Corollary}
\newtheorem{lem}[defn]{Lemma}

\newtheorem{claim}[defn]{Claim}
\newtheorem{subclaim}[defn]{Subclaim}
\setlength{\textheight}{1.1\textheight}
\title[Cayley Graphs of Dihedral Groups of Valency 4]{Automorphism Groups and Structure of 4-Valent Cayley Graphs on Dihedral Groups}

\author{Amitayu Banerjee}
\address{E\"otv\"os Lor\'and University, Budapest, Hungary}
\email{banerjee.amitayu@gmail.com}

\date{}

\makeatletter
\@namedef{subjclassname@2020}{\textup{2020} Mathematics Subject Classification}
\makeatother
\subjclass[2020]{05C25, 20B25, 05E18.}
\keywords{Cayley graph, Dihedral group, Automorphism group}
\begin{document}
\begin{abstract}

Let $G$ be a finite group and $S \subseteq G \setminus \{e\}$ an inverse-closed subset.  
The undirected Cayley graph $\mathrm{Cay}(G,S)$ has vertex set $G$, where vertices $x,y$ are adjacent when $xy^{-1}\in S$.
Kaseasbeh and Erfanian (2021) determined the structure of all $\mathrm{Cay}(D_{2n},S)$ with $|S|\le 3$, where $D_{2n}$ denotes the dihedral group of order $2n$.    
We extend this by determining the structure of all $\mathrm{Cay}(D_{2n},S)$ with $|S|=4$.
Specifically, 
\begin{enumerate}
    \item if $S$ consists of $4 \le 2k < n$ distinct rotations, then $\mathrm{Cay}(D_{2n},S)$ is the disjoint union of two isomorphic circulant graphs on $n$ vertices, and
    \item if $S$ is a generating set of $4\leq k\leq n$ reflections, then $\mathrm{Cay}(D_{2n},S)$ is bipartite and decomposes into $k$ perfect matchings. 
\end{enumerate}

Using a result of Burnside and Schur in the formulation of Evdokimov and Ponomarenko from 2005, we determine $\mathrm{Aut}(\mathrm{Cay}(D_{2p},S))$ for infinitely many primes $p$ when $S$ contains distinct rotations.
\end{abstract}
   
\maketitle

\section{Introduction}

The study of automorphism groups of Cayley graphs is one of the central topics in algebraic graph theory. Cayley graphs on dihedral groups, in particular, have received significant attention as a rich class of examples for this research  
(cf. \cite{KE2021,Kon2020,KO2006,WZ2006,WX2006,ZF2007}, among others). 
Previous research work has largely focused on
Cayley graphs with valency at most $3$. In particular, Kong \cite{Kon2020} studied the automorphism group of connected cubic Cayley graphs of dihedral groups of order $2^{n}p^{m}$ where $n\geq 2$ and $p$ is an odd prime, while
Kaseasbeh and Erfanian \cite{KE2021} determined the structure of all Cay($D_{2n}$, $S$), where $n \geq 3$ and $|S| \leq 3$.
These studies provide a foundation for understanding higher-valency cases.
The classification of 4-valent one-regular normal Cayley graphs on dihedral groups was  investigated in \cite{KO2006,WZ2006,WX2006}. Notably, Wang and Xu \cite{WX2006} 
proved that all 4-valent one-regular Cayley graph $X$ of dihedral groups are normal except that $n=4s$, and $X\cong \mathrm{Cay}(G,\{a,a^{-1}, a^{i}b,a^{-i}b\})$ where $i^{2}\equiv \pm 1\pmod {2s}$, $2\leq i\leq 2s-2$. However, a complete understanding of all 4-valent Cayley graphs over dihedral groups, including their structural properties and automorphism groups, remains an open area.
In this paper, we extend this line of research by investigating the structure of 
$\mathrm{Cay}(D_{2n},S)$ for $|S|=4$ and the automorphism 
groups of $\mathrm{Cay}(D_{2n},S)$ for $|S|\geq 4$ when $S$ consists exclusively of rotations or reflections.

\subsection{Results}

Applying a result of Burnside and Schur from 1911 in the formulation of Evdokimov and Ponomarenko \cite{EP2005}, we prove that 
if for some
$k\ge 2$ and $t_i \ge 2$ (for each $1\le i \le k-1$)
\begin{center}
$p > \max_{a,b\in\{1,t_1,\dots,t_{k-1}\}} \bigl(ab + \max\{1, t_1, \dots, t_{k-1}\}\bigr)$    
\end{center}
is a prime, and if 
$S=\{r^{\pm1}, r^{\pm t_1}, \dots, r^{\pm t_{k-1}}\}$
contains distinct non-identity rotations of $D_{2p}$, then 
$
\mathrm{Aut}(\mathrm{Cay}(D_{2p}, S))
    \cong (R(\mathbb{Z}_p) \rtimes \langle p-1 \rangle) \wr \mathbb{Z}_2 
$ (cf. Theorem~\ref{Theorem 3.7}).
Apart from this and the results stated in the Abstract, we prove the following:

\begin{enumerate}
    \item If $S=\{r^{a_1}s,\ldots,r^{a_k}s\}$ is a generating set of $4\le k\le n$ reflections, $\Gamma=\mathrm{Cay}(D_{2n},S)$ is normal, $\gcd(k,n)=1$, and \(\Delta=\{a_i-a_j : 1\le i<j\le k\}\), then 
    $\mathrm{Aut}(\Gamma)=R(G)\rtimes H$,
    where 
    $
    H\le \{u\in(\mathbb{Z}_n)^\times : u\Delta=\Delta\}.
    $

    \item If $S$ contains two rotations and two reflections, then $\mathrm{Cay}(D_{2n},S)$ consists of two isomorphic circulants joined by two inter-layer perfect matchings.

    \item If $S$ contains three rotations and one reflection, then $\mathrm{Cay}(D_{2n},S)$ is formed by two isomorphic circulants joined by a single inter-layer perfect matching.

    \item If $S$ contains three reflections and one rotation, then $\mathrm{Cay}(D_{2n},S)$ consists of two circulants (with intra-layer edges linking vertices at distance $n/2$) joined by three inter-layer perfect matchings.
\end{enumerate}

\section{Preliminaries}

\begin{defn}\label{Definition 21}
Let $\Gamma = (V(\Gamma),E(\Gamma))$ be a graph. A \emph{matching} in $\Gamma$ is a subset $M \subseteq E(\Gamma)$ such that no two edges in $M$ share a vertex, and it is a \emph{perfect matching} if every vertex of $\Gamma$ is incident with exactly one edge in $M$.
The \emph{$n$-Crown graph} for an integer $n \ge 3$ is the bipartite graph with bipartitions 
$\{x_1,\dots,x_n\}$ and $\{y_1,\dots,y_n\}$ and edges
$\{x_i,y_j\}$ for all $i\ne j$.  
Equivalently, it is  the complete bipartite graph  $K_{n,n}$ from which the perfect matching $\{\{x_i, y_i\} : 1\le i\le n\}$ has been removed.
\end{defn}

\begin{defn}\label{Definition 2.2}
Let $G$ be a group that acts on a set $X$ such that $\vert X\vert \geq 2$. 
The action is called \emph{transitive} if for all $x,y \in X$ there exists $g \in G$ such that $gx = y$.  
It is \emph{2-transitive} if for any $x_{1},x_{2},y_{1},y_{2}\in X$ such that $x_{1}\neq x_{2}$ and $y_{1}\neq y_{2}$, there exists $g \in G$ such that 
$g x_i = y_i$ for $i = 1,2$.
Let $Orb_{G}(x)=\{gx:g\in G\}$ be the orbit of $x \in X$ and $Stab_{G}(x)=\{g\in G: gx=x\}$ be the stabilizer of $x$ under the action of $G$. 
\end{defn}


\begin{defn}\label{Definition 2.3}
A group $G$ is called a \emph{semidirect product} of $N$ by $Q$, 
denoted by $G = N \rtimes Q$, if $G$ contains subgroups $N$ and $Q$ such that:
(1).  $N \trianglelefteq G$ (that is, $N$ is a normal subgroup of $G$), (2). $NQ = G$, and
(3). $N \cap Q = \{1\}$.
\end{defn}

\begin{defn}\label{Definition 2.4}
The \emph{affine group} $\mathrm{AGL}(1,n)$ is the group of functions 
$x \mapsto ax+b$ on $\mathbb{Z}_n$, where $a \in \mathbb{Z}_n^{\ast}$ and 
$b \in \mathbb{Z}_n$.  Equivalently, 
$\mathrm{AGL}(1,n) \cong \mathbb{Z}_n \rtimes \mathbb{Z}_n^{\ast}$.
\end{defn}


\begin{defn}\label{Definition 2.5}
Let $G$ be a group and $S \subseteq G \setminus \{e\}$ 
be inverse-closed i.e., $S = S^{-1}$, where 
$S^{-1}:= \{s^{-1}: s \in S\}$.
The \emph{undirected Cayley graph} Cay$(G, S)$ is the graph with a set of vertices $G$, and the vertices $u$ and $v$ are adjacent in Cay$(G, S)$ if and only if $uv^{-1} \in S$. The size of the set $S$ is called the \emph{valency} of $\mathrm{Cay}(G, S)$. It is known that Cay$(G, S)$ is connected if and only if $S$ is a generating set of $G$.
\end{defn}

\begin{defn}\label{Definition 2.6}
The \emph{right regular representation} of a group $G$, denoted by $R(G)$,
is the permutation group 
$\{\rho_g : G \to G \mid \rho_g(x)=xg 
\text{ for all }
x\in G,\; g\in G\}$ in $\mathrm{Sym}(G)$.
The automorphism group of $\mathrm{Cay}(G,S)$ is denoted by
$\mathrm{Aut}(\mathrm{Cay}(G,S))$.
\end{defn}

It is known that $R(G)$ is a subgroup of $\mathrm{Aut}(\mathrm{Cay}(G,S))$.

\begin{defn}\label{Definition 2.7}
The {\em stabilizer of vertex $v$ in $\mathrm{Aut}(\mathrm{Cay}(G,S))$} is denoted by $\mathrm{Aut}(\mathrm{Cay}(G,S))_v$. 
Given a group $G$ and a subset $S \subseteq G$, let 
$\mathrm{Aut}(G,S) = \{\alpha \in \mathrm{Aut}(G) \mid \alpha(S) = S\}$.
\end{defn}

If $\Gamma = \mathrm{Cay}(G, S)$, then $\mathrm{Aut}(G,S)$ is a subgroup of the stabilizer $\mathrm{Aut}(\Gamma)_{1}$, where $1$ is the identity element of the group $G$. Moreover, $R(G)\rtimes \mathrm{Aut}(G,S)\le \mathrm{Aut}(\Gamma)$.

\begin{defn}\label{Definition 2.8}
A Cayley graph $\Gamma=$ Cay$(G,S)$ is {\em normal} if $R(G)$ is a normal subgroup of $\mathrm{Aut}(\Gamma)$ i.e., $R(G)\trianglelefteq \mathrm{Aut}(\Gamma)$. The graph $\Gamma$ is normal if and only if $\mathrm{Aut}(\Gamma)=R(G) \rtimes \mathrm{Aut}(G,S)$.
\end{defn}

\begin{fact}\label{Fact 2.9} 
The following holds:
    \begin{enumerate}
        \item (\cite{HHL2017}) Cay$(G,S)$ is normal if and only if $\mathrm{Aut}(\mathrm{Cay}(G,S))_{e} = \mathrm{Aut}(G,S)$.
        \item (Burnside-Schur; \cite{EP2005}) Every primitive finite permutation group containing a regular cyclic subgroup is either 2-transitive or permutationally isomorphic to a subgroup of the affine group AGL$(1,p)$ where $p$ is a prime.
        
        
        \item If the action of $G$ on $X$ is $2$-transitive, then the action of Stab$_{G}(x)$ on $X\backslash \{x\}$ is transitive for all $x\in X$.
    \end{enumerate}
\end{fact}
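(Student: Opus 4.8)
The statement collects four standard facts, so the ``proof'' I envisage is really four short arguments, two of which are citations. For item (1), the plan is to use that $R(G)\le\mathrm{Aut}(\Gamma)$ always acts regularly on $G$, so $\mathrm{Aut}(\Gamma)=R(G)\,\mathrm{Aut}(\Gamma)_e$ with $R(G)\cap\mathrm{Aut}(\Gamma)_e=\{1\}$, whence $\mathrm{Aut}(\Gamma)=R(G)\rtimes\mathrm{Aut}(\Gamma)_e$. By Definition~\ref{Definition 2.8}, $\Gamma$ is normal precisely when $\mathrm{Aut}(\Gamma)=R(G)\rtimes\mathrm{Aut}(G,S)$; since $\mathrm{Aut}(G,S)\le\mathrm{Aut}(\Gamma)_e$ unconditionally, comparing the two semidirect-product decompositions shows that normality is equivalent to $\mathrm{Aut}(\Gamma)_e=\mathrm{Aut}(G,S)$. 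This is exactly the characterization recorded in \cite{HHL2017}, which I would cite rather than reprove in full. For item (2), I would simply invoke the Burnside--Schur theorem in the Evdokimov--Ponomarenko formulation \cite{EP2005} as a black box; this is the one item I do not attempt to establish from scratch.

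For item (3), the plan is the classical observation that any $\alpha\in\mathrm{Aut}(\mathbb{Z}_n)$ is determined by the value $\alpha(1)$, which must again be a generator of $\mathbb{Z}_n$, i.e.\ an element $k$ with $\gcd(k,n)=1$; conversely each such $k$ yields the automorphism $x\mapsto kx$. The assignment $\alpha\mapsto\alpha(1)$ is then a bijection onto $\mathbb{Z}_n^{*}$, and it is a group homomorphism because $(\alpha\circ\beta)(1)=\alpha(\beta(1))=\beta(1)\cdot\alpha(1)$ once the additive group is read multiplicatively on the index set; this gives $\mathrm{Aut}(\mathbb{Z}_n)\cong\mathbb{Z}_n^{*}$. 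For item (4), given $y,z\in X\setminus\{x\}$, I take the identity if $y=z$, and otherwise apply $2$-transitivity (Definition~\ref{Definition 2.2}) to the ordered pairs $(x,y)$ and $(x,z)$ to obtain $g\in G$ with $gx=x$ and $gy=z$; then $g\in\mathrm{Stab}_G(x)$ witnesses that the stabilizer acts transitively on $X\setminus\{x\}$.

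The ``hard part'' here is only apparent: item (2) is a deep result that is imported wholesale from \cite{EP2005}, and item (1) relies on the structure theory of regular permutation groups as packaged in \cite{HHL2017}. Items (3) and (4) are entirely elementary, and the only care needed is the notational passage between additive and multiplicative structure in (3) and the reduction to the trivial case $y=z$ in (4). Consequently I would present items (1) and (2) with references and give the two-line arguments for (3) and (4) explicitly.
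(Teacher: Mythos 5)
The paper does not actually prove Fact~\ref{Fact 2.9}: items (1) and (2) are imported from \cite{HHL2017} and \cite{EP2005}, and (3), (4) are treated as standard background. Your plan --- cite (1) and (2), verify (3) and (4) directly --- is therefore compatible with the paper's treatment, and your arguments for (3) and (4) are correct: $\alpha\mapsto\alpha(1)$ is indeed an isomorphism $\mathrm{Aut}(\mathbb{Z}_n)\to\mathbb{Z}_n^{*}$ (the order of the factors in your composition computation is harmless since $\mathbb{Z}_n^{*}$ is abelian), and the reduction to the pairs $(x,y)$, $(x,z)$ with the trivial case $y=z$ handled separately is exactly the standard proof of (4).

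One caution on your sketch of (1): from regularity of $R(G)$ you only get the exact factorization $\mathrm{Aut}(\Gamma)=R(G)\,\mathrm{Aut}(\Gamma)_e$ with $R(G)\cap\mathrm{Aut}(\Gamma)_e=\{1\}$; writing this unconditionally as $R(G)\rtimes\mathrm{Aut}(\Gamma)_e$ is wrong, since the semidirect structure requires $R(G)\trianglelefteq\mathrm{Aut}(\Gamma)$, which is precisely the normality being characterized (e.g.\ for $\mathrm{Cay}(\mathbb{Z}_4,\{1,2,3\})=K_4$ the stabilizer is $S_3$ and $R(\mathbb{Z}_4)$ is not normal in $S_4$). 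The order-counting comparison does give the forward direction (normal $\Rightarrow$ $\mathrm{Aut}(\Gamma)_e=\mathrm{Aut}(G,S)$, using $\mathrm{Aut}(G,S)\le\mathrm{Aut}(\Gamma)_e$), but the converse needs the additional observation that group automorphisms normalize $R(G)$, via $\alpha\rho_g\alpha^{-1}=\rho_{\alpha(g)}$ for $\alpha\in\mathrm{Aut}(G,S)$; combined with the factorization this yields $R(G)\trianglelefteq\mathrm{Aut}(\Gamma)$. Since you ultimately defer to \cite{HHL2017} for this item, as the paper does, the slip does not invalidate your proposal, but the sketch as written should not be presented as a self-contained proof of the equivalence.
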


Since any transitive permutation group of prime degree is primitive, Fact \ref{Fact 2.9}(2) immediately yields the following.

\begin{corr}\label{Corollary 2.10}
{\em Let $p$ be a prime and $G\leq S_p$ be a transitive permutation group of degree $p$ that contains a regular cyclic subgroup. Then $G$ is primitive and $G$ is either
isomorphic to a subgroup of $\mathrm{AGL}(1,p)$, or $G$ is $2$-transitive.
}
\end{corr} 

Throughout the manuscript, we will use the following notations.
\begin{itemize}
    \item $D_{2n}=\langle r,s \mid r^n=e, \; s^2=e, \; srs=r^{-1} \rangle$ be the dihedral group of order $2n$,
    \item $\mathbb{Z}_{n}$ denotes cyclic group of order $n$,
    \item  
    $R_{n} = \{ r^i : i \in \mathbb{Z}_n \}$ be the set of all rotations, and 
    \item $F_{n} = \{ sr^i : i \in \mathbb{Z}_n \}$ be the set of all reflections. Thus, $D_{2n} = R_{n} \cup F_{n}$.
    \item The indices of rotations and reflections are taken modulo $n$ whenever we work with $\mathrm{Cay}(D_{2n}, S)$.
    \item We refer to edges connecting two rotations or two reflections as \emph{intra-layer edges}, and those connecting a rotation with a reflection as \emph{inter-layer edges}.

    \item For graphs $G_1$ and $G_2$,
$G_1 + G_2$ is the 
\emph{disjoint union} of $G_1$ and $G_2$.
\end{itemize}
Let $S \subset D_{2n}$ satisfy $e \notin S$, $S=S^{-1}$ and $|S|=4$. 
Then $\Gamma := \mathrm{Cay}(D_{2n}, S)$
falls into exactly one of the following mutually exclusive types:

\begin{itemize}
    \item[] \noindent\textbf{Case (I)— $S \subseteq R_{n}$.}  
Then $S = \{r^{\pm a}, r^{\pm b}\}$
for some $a,b \in \mathbb{Z}_n$ (possibly $a \equiv \pm b$ (mod $n$)).
     \item[] \noindent\textbf{Case (II)— $S \subseteq F_{n}$.}  
Then $S=\{sr^{a_1}, sr^{a_2}, sr^{a_3}, sr^{a_4}\}$ for some $a_{1},a_{2},a_{3},a_{4} \in \mathbb{Z}_n$. Clearly, $S=S^{-1}$ since each reflection is an involution. 
      \item[] \noindent\textbf{Case (III)—}  
$S$ contains exactly two rotations and two reflections. Then, for some $a,b_1,b_2\in\mathbb{Z}_n$,
$S=\{r^{\pm a}, sr^{b_1}, sr^{b_2}\}$. 
    \item[] \noindent\textbf{Case (IV)—}  
$S$ contains exactly three rotations and one reflection. This case occurs only when $n$ is even. Then three rotations in $S$ must consist of one inverse pair and the unique element of order two, that is $r^{n/2}$. Thus,
$S = \{r^{\pm a},\, r^{n/2},\, sr^b\}$, for some $a,b\in\mathbb{Z}_n$.
    \item[] \textbf{Case (V)—} 
$S$ contains exactly three reflections and one rotation. This case arises only when $n$ is even and the rotation in $S$ must be $r^{n/2}$. Hence,
$S = \{ s r^{a_1}, \; s r^{a_2}, \; s r^{a_3}, \; r^{n/2} \}$, for some $a_1,a_2,a_3 \in \mathbb{Z}_n$.
\end{itemize}

In sections 3--5, we will analyze the above-mentioned cases.

\section{Only rotations}

\begin{prop}\label{Proposition 3.1}
{\em
Assume $S\subseteq R_{n}\setminus\{e\}$ is an inverse-closed subset of rotations of $D_{2n}$ with $|S|=2k<n$ for some $k\ge2$.  
Choose representatives $a_1,\dots,a_k\in\mathbb{Z}_n$ such that 
$S=\{r^{\pm a_1},\dots,r^{\pm a_k}\}$.
Let $T = \{\pm a_1, \dots, \pm a_k\}$, 
$G_{1} = \mathrm{Cay}(D_{2n}, S)$, 
$G_{2} = \mathrm{Cay}(\mathbb{Z}_n, T)$, and $d = \gcd(n, a_1, \dots, a_k)$. Then:
\begin{enumerate}
  \item $\mathrm{Cay}(D_{2n}, S) \;\cong\; \mathrm{Cay}(\mathbb{Z}_n, T) + \mathrm{Cay}(\mathbb{Z}_n, T)$.
  
  \item If $d = 1$, then $G_{2}$ is connected. So $G_{1}$ has $2$ components isomorphic to $G_{2}$.
  
  \item If $d > 1$, write $n = d n'$ and $a_i = d a_i'$ for all $i$, and set 
  $T' = \{\pm a_1', \dots, \pm a_k'\} \subset \mathbb{Z}_{n'}$. 
  Then $G_{2}$ decomposes into $d$ components isomorphic to $\mathrm{Cay}(\mathbb{Z}_{n'}, T')$. 
  So, $G_{1} \cong G_{2} + G_{2}$ splits into $2d$ components isomorphic to $\mathrm{Cay}(\mathbb{Z}_{n'}, T')$.
\end{enumerate}
}
\end{prop}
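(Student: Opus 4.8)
The plan is to prove part (1) by a direct edge analysis, then deduce (2) and (3) from standard facts about connected components of circulant graphs. Write $R=\{r^i:i\in\mathbb{Z}_n\}$ and $F=\{sr^i:i\in\mathbb{Z}_n\}$, so that $D_{2n}=R\cup F$ is partitioned into the ``rotation layer'' and the ``reflection layer''. First I would compute the relevant quotients $xy^{-1}$ for the three types of pairs: if $x=r^i$ and $y=r^j$ then $xy^{-1}=r^{i-j}$, which lies in $S$ iff $i-j\in T$; if $x=sr^i$ and $y=sr^j$ then $xy^{-1}=sr^i r^{-j}s=r^{j-i}$, which lies in $S$ iff $j-i\in T$, and since $S=S^{-1}$ forces $T=-T$, this is equivalent to $i-j\in T$; finally, if exactly one of $x,y$ is a rotation, then $xy^{-1}$ is a reflection and hence not in $S\subseteq R$. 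Consequently $\mathrm{Cay}(D_{2n},S)$ has no inter-layer edges, the map $r^i\mapsto i$ is a graph isomorphism from the subgraph induced on $R$ onto $\mathrm{Cay}(\mathbb{Z}_n,T)=G_2$, and likewise $sr^i\mapsto i$ is a graph isomorphism from the subgraph induced on $F$ onto $G_2$ (here the inverse-closedness of $S$ is exactly what is used). Therefore $G_1\cong G_2+G_2$, which is part (1).

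For part (2), I would invoke the standard fact that for an abelian group $A$ and an inverse-closed $T\subseteq A\setminus\{0\}$, the connected components of $\mathrm{Cay}(A,T)$ are precisely the cosets of the subgroup $\langle T\rangle$, each inducing a copy of $\mathrm{Cay}(\langle T\rangle,T)$; in particular $\mathrm{Cay}(A,T)$ is connected iff $\langle T\rangle=A$. Here $\langle T\rangle=\langle a_1,\dots,a_k\rangle\le\mathbb{Z}_n$ equals $d\mathbb{Z}_n$ with $d=\gcd(n,a_1,\dots,a_k)$, so $G_2$ is connected precisely when $d=1$. Combining this with part (1) gives that $G_1$ then has exactly two components, each isomorphic to $G_2$.

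For part (3), when $d>1$ the subgroup $H=\langle T\rangle=d\mathbb{Z}_n$ has index $d$, so $G_2$ has exactly $d$ components, each isomorphic to $\mathrm{Cay}(H,T)$. Writing $n=dn'$ and $a_i=da_i'$, the map $H\to\mathbb{Z}_{n'}$ sending $dt\bmod n\mapsto t\bmod n'$ is a group isomorphism carrying $T=\{\pm da_1',\dots,\pm da_k'\}$ onto $T'=\{\pm a_1',\dots,\pm a_k'\}$, hence each component is isomorphic to $\mathrm{Cay}(\mathbb{Z}_{n'},T')$; feeding this back into part (1) yields the stated $2d$ components for $G_1$. I do not anticipate a genuine obstacle here: the only points requiring care are checking that the reflection layer reproduces the circulant structure (this is where $S=S^{-1}$ enters) and keeping the subgroup/coset bookkeeping straight when passing from $\mathbb{Z}_n$ to $\mathbb{Z}_{n'}$ — in particular verifying $\langle T\rangle=d\mathbb{Z}_n$ with the modulus $n$ taken into account, rather than $\gcd(a_1,\dots,a_k)\mathbb{Z}$ over $\mathbb{Z}$.
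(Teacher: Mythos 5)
Your proposal is correct and follows essentially the same route as the paper: showing that rotation generators produce no inter-layer edges so that $G_1$ splits into two copies of $G_2$, then identifying $\langle T\rangle=d\mathbb{Z}_n$ to settle connectivity and, for $d>1$, matching the components (cosets of $d\mathbb{Z}_n$, which the paper writes out explicitly as the sets $C_j$ with maps $\psi_j$) with $\mathrm{Cay}(\mathbb{Z}_{n'},T')$. The only cosmetic difference is that you verify adjacency via $xy^{-1}$ and cite the components-are-cosets fact, where the paper writes the edge sets and isomorphisms explicitly; the mathematics is the same.
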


\begin{proof}
(1). The vertex set of $\mathrm{Cay}(D_{2n}, S)$ is $D_{2n} = R_{n} \cup F_{n}$.  
For any rotation $r^t \in R_{n}$, 
if $g \in R_{n}$, then $g r^t \in R_{n}$, while if $g \in F_{n}$, then $g r^t \in F_{n}$.  
Thus, every edge $\{ g, g r^t \}$ produced by a rotation generator $r^{t}\in S$ is an intra-layer edge.  
Consequently, no generator in $S$ produces an edge joining $R_{n}$ to $F_{n}$.  
Therefore, $\mathrm{Cay}(D_{2n}, S)$ splits into two vertex-disjoint subgraphs induced on $R_{n}$ and on $F_{n}$.
Consider the induced subgraph $\Gamma_{R_{n}}$ of $\mathrm{Cay}(D_{2n}, S)$ on $R_{n}$.  
If $t \in T$, then $\Gamma_{R_{n}}$ contains the edge $\{ r^i, r^{i+t} \}$.  
Hence $\Gamma_{R_{n}}\cong\mathrm{Cay}(\mathbb{Z}_n, T)$.  
Similarly, the induced subgraph $\Gamma_{F_{n}}$ on $F_{n}$ is isomorphic to $\mathrm{Cay}(\mathbb{Z}_n, T)$.  
In particular, the map $\varphi : R_{n} \to F_{n}$ defined by $\varphi(r^i) = s r^i$ is a bijection, and for any $t \in T$,
$\{ \varphi(r^i), \varphi(r^{i+t}) \}
= \{ s r^i, s r^{i+t} \}
= \{ s r^i, (s r^i) r^t \}$.
Thus, edges inside $R_{n}$ correspond exactly to edges inside $F_{n}$ under $\varphi$, so $\Gamma_{F_{n}} \cong \Gamma_{R_{n}}$.  
Consequently,
$\mathrm{Cay}(D_{2n}, S)
= \Gamma_{R_{n}} + \Gamma_{F_{n}} 
\;\cong\;
\mathrm{Cay}(\mathbb{Z}_n, T) + \mathrm{Cay}(\mathbb{Z}_n, T)$.

\begin{figure}[h]
\centering
\begin{tikzpicture}[
    scale=0.45,
    every node/.style={circle,draw,fill=white,inner sep=1pt,minimum size=7pt},
    line width=0.5pt
]

\newcommand{\Ktwotwotwo}[2]{
  \def\i{#1}
  \def\xshift{#2}
  \def\dx{1.0}

  \node (A1\i) at (\xshift-0.8*\dx, 1.6) {};
  \node (A2\i) at (\xshift+0.8*\dx, 1.6) {};

  \node (B1\i) at (\xshift-1.6*\dx, 0) {};
  \node (B2\i) at (\xshift+1.6*\dx, 0) {};

  \node (C1\i) at (\xshift-0.8*\dx,-1.6) {};
  \node (C2\i) at (\xshift+0.8*\dx,-1.6) {};

  \foreach \u in {A1\i,A2\i}{
    \foreach \v in {B1\i,B2\i,C1\i,C2\i}{
      \draw (\u)--(\v);
    }
  }
  \foreach \u in {B1\i,B2\i}{
    \foreach \v in {C1\i,C2\i}{
      \draw (\u)--(\v);
    }
  }
}

\Ktwotwotwo{1}{0}    
\Ktwotwotwo{2}{6.0}  

\node[draw=none,fill=none] at (-1,-2.7)
  {$\mathrm{Cay}(\mathbb{Z}_{6},\{\pm1,\pm2\})$};
\node[draw=none,fill=none] at (7,-2.7)
  {$\mathrm{Cay}(\mathbb{Z}_{6},\{\pm1,\pm2\})$};

\end{tikzpicture}
\vspace{-4.5em}
\caption{\em The graph $\mathrm{Cay}(D_{12}, \{r^{\pm1}, r^{\pm2}\})$ is the disjoint union $K_{2,2,2} + K_{2,2,2}$.}
\end{figure}


(2). We know that $\mathrm{Cay}(\mathbb{Z}_n, T)$ is connected if and only if $T$ is a generating set of $\mathbb{Z}_{n}$.  
The subgroup generated by $T$ is 
$\langle T \rangle$ = $\{ x_1 a_1 + \cdots + x_k a_k \pmod{n} : x_i \in \mathbb{Z} \}$
= $d \mathbb{Z}_n$
= $\{ 0, d, 2d, \dots, n - d \}$.  
Hence $\mathrm{Cay}(\mathbb{Z}_n, T)$ is connected if and only if $\langle T \rangle = \mathbb{Z}_n$, which is equivalent to $d=\gcd(n, a_1, \dots, a_k) = 1$.

(3). We recall that $d = \gcd(n, a_1, \dots, a_k)$, $n = d n'$, $a_i = d a_i'$ for $i = 1, \dots, k$.  
Let $\{ C_j : 0 \le j \le d-1 \}$ be a partition of $\mathbb{Z}_{n}$ where 
$C_j = \{j + k d : k = 0, \dots, n'-1\}$.
The graph $G_2$ is the disjoint union of induced subgraphs on $C_j$'s.  
In particular, if $x \in C_j$ and $t \in T$, then $t$ is a multiple of $d$ (since each $a_i$ is).  
Thus, $x + t \in C_j$ since $x + t \equiv x \pmod{d}$.  
Consequently, no edge $\{ x, x + t \}$ joins $C_i$ and $C_j$ for $i \ne j$.
Fix $0 \le j \le d-1$.  
The map 
$\psi_j : C_j \to \mathbb{Z}_{n'}, \psi_j(j + k d) \equiv k \pmod{n'}$,
is a graph isomorphism from the induced subgraph on $C_j$ to 
$\mathrm{Cay}(\mathbb{Z}_{n'}, T')$,  
where $T' = \{\pm a_1', \dots, \pm a_k'\}$.  
Therefore, there are exactly $d$ identical components, each isomorphic to $\mathrm{Cay}(\mathbb{Z}_{n'}, T')$.  
Since $\gcd(n', a_1', \dots, a_k') = 1$, $\langle T' \rangle = \mathbb{Z}_{n'}$, and thus $\mathrm{Cay}(\mathbb{Z}_{n'}, T')$ is connected.  
\end{proof}

\begin{table}[h!]
\centering
\begin{tabular}{|c|c|c|c|c|}
\hline
$n$ & $T$ & Cay$(\mathbb{Z}_{n},T)$ & S & Cay$(D_{2n},S)$\\
\hline
\hline
4 & $\{\pm 1, \pm 2\}$ & Complete graph $K_4$ & $\{r^{\pm 1}, r^{\pm 2}\}$ & $K_4 + K_{4}$\\

6 & $\{\pm 1, \pm 2\}$ & Octahedral graph ($K_{2,2,2}$) & $\{r^{\pm 1}, r^{\pm 2}\}$ & $K_{2,2,2} + K_{2,2,2}$ \\

6 & $\{\pm 1, \pm 3\}$ &  $\mathrm{Circ}(6;\{1,3\})$ & $\{r^{\pm 1}, r^{\pm 3}\}$ & $\mathrm{Circ}(6;\{1,3\}) + \mathrm{Circ}(6;\{1,3\})$ \\

8 & $\{\pm 1, \pm 3\}$ & complete bipartite graph $K_{4,4}$ & $\{r^{\pm 1}, r^{\pm 3}\}$ & $K_{4,4}+ K_{4,4}$\\
\hline
\end{tabular}
\vspace{2mm}
\caption{\em Examples of Cay$(\mathbb{Z}_{n},T)$ and Cay$(D_{2n},S)$ for $n \le 8$.}
\end{table}

\subsection{Automorphism groups}
\begin{lem}\label{Lemma 3.2}
{\em Let $p\geq 3$ be a prime. Let $H$ be a proper subgroup of $\mathrm{Aut}(\mathbb{Z}_p) \cong \mathbb{Z}_p^*$. Let $T$ be a generating set of $\mathbb{Z}_p$ that is invariant under the action of $H$ but not under any larger subgroup of $\mathrm{Aut}(\mathbb{Z}_p)$. 
If $\Gamma = \mathrm{Cay}(\mathbb{Z}_p,T)$ and the action of $\mathrm{Aut}(\Gamma)$ on $\mathbb{Z}_p$ is not 2-transitive, then $\mathrm{Aut}(\Gamma)\cong R(\mathbb{Z}_p) \rtimes H$.}    
\end{lem}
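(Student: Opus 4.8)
The plan is to identify $\mathrm{Aut}(\Gamma)$ by locating it inside $\mathrm{Sym}(\mathbb{Z}_p)$ via Corollary~\ref{Corollary 2.10}. First I would record the standard containment $R(\mathbb{Z}_p)\rtimes H\le \mathrm{Aut}(\Gamma)$: since $H\le\mathrm{Aut}(\mathbb{Z}_p,T)$ by hypothesis (as $T$ is $H$-invariant), this follows from the remark after Definition~\ref{Definition 2.7}. For the reverse inclusion, set $A=\mathrm{Aut}(\Gamma)$. Since $R(\mathbb{Z}_p)\le A$ is a regular cyclic subgroup, $A$ is a transitive permutation group of prime degree $p$, hence primitive, so Corollary~\ref{Corollary 2.10} applies: either $A$ is $2$-transitive, or $A$ is permutationally isomorphic to a subgroup of $\mathrm{AGL}(1,p)\cong\mathbb{Z}_p\rtimes\mathbb{Z}_p^\ast$. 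The $2$-transitive alternative is excluded by hypothesis, so $A\le\mathrm{AGL}(1,p)$ (after conjugating so that the regular cyclic subgroup is $R(\mathbb{Z}_p)=\mathbb{Z}_p$, the translation subgroup).

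Next I would analyze the structure of $A$ as a subgroup of $\mathrm{AGL}(1,p)=\mathbb{Z}_p\rtimes\mathbb{Z}_p^\ast$. Every subgroup of $\mathrm{AGL}(1,p)$ containing the translations $\mathbb{Z}_p$ has the form $\mathbb{Z}_p\rtimes K$ for some subgroup $K\le\mathbb{Z}_p^\ast\cong\mathrm{Aut}(\mathbb{Z}_p)$; this is because $\mathbb{Z}_p$ is normal in $\mathrm{AGL}(1,p)$ and the quotient is $\mathbb{Z}_p^\ast$, so $K$ is the image of $A$ under the projection, and the splitting is inherited. Thus $A=\mathbb{Z}_p\rtimes K$ with $H\le K\le\mathrm{Aut}(\mathbb{Z}_p)$ (the containment $H\le K$ because $R(\mathbb{Z}_p)\rtimes H\le A$). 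Moreover $A=A_0\ltimes R(\mathbb{Z}_p)$ identifies the point-stabilizer $A_0$ of the vertex $0$ with $K$, and each element of $K$, being an affine map fixing $0$, is literally a group automorphism of $\mathbb{Z}_p$. Since every element of $A_0$ is a graph automorphism fixing $0$, it must fix the neighbourhood $T$ of $0$ setwise; hence $K\le\mathrm{Aut}(\mathbb{Z}_p,T)$, i.e. $K$ stabilizes $T$.

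Now the maximality hypothesis on $H$ closes the argument: $T$ is invariant under $H$ but not under any larger subgroup of $\mathrm{Aut}(\mathbb{Z}_p)$, and we have just shown $K$ is a subgroup of $\mathrm{Aut}(\mathbb{Z}_p)$ with $H\le K$ that also stabilizes $T$. By maximality $K=H$, hence $A=\mathbb{Z}_p\rtimes K\cong R(\mathbb{Z}_p)\rtimes H$, as claimed. The step I expect to require the most care is the reduction to $\mathrm{AGL}(1,p)$ and the bookkeeping of conjugation: one must be careful that after applying Corollary~\ref{Corollary 2.10} the copy of $\mathbb{Z}_p$ playing the role of the translations is exactly $R(\mathbb{Z}_p)$, so that the identification of $A_0$ with a genuine subgroup of $\mathrm{Aut}(\mathbb{Z}_p)$ fixing $T$ is legitimate; all other steps are routine once the setup is fixed. (One should also note in passing that $H$ being a proper subgroup of $\mathbb{Z}_p^\ast$ guarantees $\Gamma$ is not complete, so the hypotheses are not vacuous, though this is not strictly needed for the deduction.)
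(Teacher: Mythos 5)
Your proof is correct and follows essentially the same route as the paper: both use the Burnside--Schur reduction (Corollary \ref{Corollary 2.10}) to place $\mathrm{Aut}(\Gamma)$ inside $\mathrm{AGL}(1,p)$ with $R(\mathbb{Z}_p)$ as the translation subgroup, identify the vertex stabilizer with the multipliers $m_c$ preserving $T$, and conclude $K=H$ from the maximality hypothesis. The paper merely packages the same argument through the normality of $\Gamma$ and the identification $\mathrm{Aut}(\mathbb{Z}_p,T)=H$ (Claims \ref{Claim 3.3} and \ref{Claim 3.4}), rather than through your explicit decomposition $A=\mathbb{Z}_p\rtimes K$.
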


\begin{proof}
Denote $A = \mathrm{Aut}(\Gamma)$ and 
$R(\mathbb{Z}_p)=\{R_a:x\mapsto x+a \mid a\in\mathbb{Z}_p\}$. 

\begin{claim}\label{Claim 3.3}
    {\em $\Gamma$ is normal.}
\end{claim}

\begin{proof}
All connected Cayley graphs of $\mathbb{Z}_{p}$ are normal except the complete graph $K_{p}$ by Galois and Burnside’s theorems (cf. \cite[pg. 82]{LX2003}). The condition that $A$ is not 2-transitive effectively excludes the case $\Gamma=K_{p}$.
Thus, $\Gamma$ is normal.
We provide an alternative argument to show that $\Gamma$ is normal using Burnside-Schur's theorem.
Since automorphism groups of Cayley graphs are vertex-transitive, $A$ is a transitive permutation group.
Moreover, $R(\mathbb{Z}_{p})\leq A$ is a regular cyclic subgroup of $S_{p}$ since $R(\mathbb{Z}_{p})\cong \mathbb{Z}_{p}$, each $R_{a}\in R(\mathbb{Z}_{p})$ is a permutation of $\mathbb{Z}_{p}$ and that the action of $\mathbb{Z}_{p}$ is regular (i.e., transitive and free). Since $A$ is not $2$-transitive, by Corollary \ref{Corollary 2.10}, $A$ is isomorphic to a subgroup of AGL$(1,p)
=\{x\mapsto a x + b : a\in\mathbb{Z}_p^{*},\,b\in\mathbb{Z}_p\}$.
For each $\alpha\in A$, write $\alpha(0)=b$ and set
$a=\alpha(1)-\alpha(0)\in\mathbb{Z}_p^{\times}$ (since $\alpha$ is a permutation, we have 
$\alpha(1) \neq \alpha(0)$, and so 
$a=\alpha(1)-\alpha(0) \not\equiv 0 \pmod{p}$).  
Define 
\begin{center}
$\varphi(\alpha) \in \mathrm{AGL}(1,p) \text{ by } 
\varphi(\alpha)(x) = a x + b \text{ for all } x \in \mathbb{Z}_p$.
\end{center}

Since $\varphi(\alpha)$ and $\alpha$ agree on $0$ and $1$, and since every
$x\in\mathbb{Z}_p$ can be written as $1+\cdots+1$ while $\alpha$ preserves the
cyclic order, it follows that $\varphi(\alpha)(x)=\alpha(x)$ for all
$x\in\mathbb{Z}_p$.
Thus, 
the map $\varphi(\alpha)$ coincides with $\alpha$ on all of $\mathbb{Z}_p$. 
Thus $\varphi: A \hookrightarrow \mathrm{AGL}(1,p)$ is an injective homomorphism and  
$A \le \mathrm{AGL}(1,p)$.

Write elements of $\mathrm{AGL}(1,p)$ as pairs $(u,b)$ acting by $(u,b):x\mapsto ux+b$.
The group operation is $(u_1,b_1)(u_2,b_2)=(u_1u_2,u_1b_2+b_1)$ and inverses are
$(u,b)^{-1}=(u^{-1},-u^{-1}b)$. Thus the translation $t_a:x\mapsto x+a$ is the pair $(1,a)$.
For any $(u,b)\in\mathrm{AGL}(1,p)$,
$
(u,b)(1,a)(u,b)^{-1}=(u,b)(1,a)(u^{-1},-u^{-1}b)=(1,ua),
$
which is again a translation. Hence conjugation by every element of $\mathrm{AGL}(1,p)$
preserves the set of translations, so $R(\mathbb{Z}_{p})=\{(1,a):a\in\mathbb{Z}_p\}\unlhd $ AGL$(1,p)$.
Since $A\leq \mathrm{AGL}(1,p)$, and $R(\mathbb{Z}_{p})\leq A$, we have $R(\mathbb{Z}_{p})\unlhd A$. Thus, $\Gamma$ is normal.
 \end{proof}

\begin{claim}\label{Claim 3.4}
    $\mathrm{Aut}(\mathbb{Z}_{p},T)=H$.
\end{claim}

\begin{proof}
Let $A_0 = \{\alpha \in A : \alpha(0)=0\}$ denote the stabilizer of $0$ in $A$. 
Since automorphisms preserve adjacency, $\alpha(T)=T$ for all $\alpha\in A_0$. 

\begin{subclaim}\label{subclaim 3.5}
{\em Let $p$ be a prime and let $T \subseteq \mathbb{Z}_p$ be a generating, inverse-closed subset.  
For each $c \in \mathbb{Z}_p^{\times}$ define $m_c : \mathbb{Z}_p \to \mathbb{Z}_p$ by $m_c(x) = cx$.  
Then $A_0 = \{\, m_c : c \in \mathbb{Z}_p^{\times},\; cT = T \,\}
   = \mathrm{Aut}(\mathbb{Z}_p, T)$.
}
\end{subclaim}

\begin{proof}
By Claim \ref{Claim 3.3} and Fact \ref{Fact 2.9}(1), $\Gamma$ is normal and $\mathrm{Aut}(\Gamma)_e=\mathrm{Aut}(\mathbb{Z}_{p},T)$. Thus, for any Cayley graph on a cyclic group of prime order, every automorphism fixing the identity element is a \emph{group automorphism}.
Since the group automorphisms of $(\mathbb{Z}_p,+)$ are exactly the multipliers $m_c : x \mapsto cx$ with $c \in \mathbb{Z}_p^{\times}$, we have
$A_0 \subseteq \{m_c : c \in \mathbb{Z}_p^{\times}\}$.
Moreover, for any such $m_c$,
\[
\{x, x+t\} \text{ is an edge } \iff t \in T
   \iff c t \in cT
   \iff \{cx, cx + c t\} \text{ is an edge}.
\]
Thus $m_c$ is an automorphism of $\Gamma$ if and only if $cT = T$.  
Conversely, any $\alpha \in A_0$ must satisfy $\alpha(T) = T$, so $\alpha = m_c$ for some $c$ with $cT = T$.  
Therefore $A_0 = \{m_c : c \in \mathbb{Z}_p^{\times},\; cT = T\}$.
 \end{proof}

Since $\langle T\rangle=\mathbb{Z}_{p}$ where $T$ is invariant under the action of $H$ but not under any larger subgroup of $\mathrm{Aut}(\mathbb{Z}_{p})$, we have $H=\{h\in\mathbb{Z}_p^{*} : hT=T\}$.
We observe that
$\mathrm{Aut}(\mathbb{Z}_p,T)
     = \{m_h : x \mapsto h x \mid h \in H\}$.
Pick any $m_{h}$ for $h\in H$.
For all adjacent pairs $\{x,y\}$, $y-x\in T\implies m_{h}(y)-m_{h}(x)=h(y-x)\in hT=T$. Thus, $m_{h}\in \mathrm{Aut}(\mathbb{Z}_{p},T)$ as $m_{h}(0)=0$. On the other hand, if $\alpha\in \mathrm{Aut}(\mathbb{Z}_p,T)$, then $\alpha=m_{c}$ for some $c\in \mathbb{Z}_{p}^{*}$ and $cT=T$. So, $c\in H$ and $\alpha=m_{c}\in \{m_h : x \mapsto h x \mid h \in H\}$.
 \end{proof}

By claims \ref{Claim 3.3} and \ref{Claim 3.4}, we have $A=\mathrm{Aut}(\Gamma) = R(\mathbb{Z}_{p})\rtimes \mathrm{Aut}(\mathbb{Z}_{p},T)\cong R(\mathbb{Z}_{p}) \rtimes H$. This completes the proof of Lemma \ref{Lemma 3.2}.
 \end{proof}

\begin{thm}\label{Theorem 3.6}
{\em
Let $p\ge 3$ be prime. Let
$S=\{r^{\pm a_1},\dots,r^{\pm a_k}\}\subseteq R_{p}\setminus\{e\}$
and let 
$T=\{\pm a_1,\dots,\pm a_k\}$
denote the exponents of the rotations in $S$ such that:
\begin{enumerate}
  \item 
  $T$ is invariant under the action of a proper subgroup $H$ of $\mathrm{Aut}(\mathbb{Z}_p)$, but not under the action of any subgroup of $\mathrm{Aut}(\mathbb {Z}_p)$ strictly larger than $H$, and
  \item If $\Gamma = \mathrm{Cay}(\mathbb{Z}_p,T)$, then the action of $\mathrm{Aut}(\Gamma)$ on $\mathbb{Z}_p$ is not 2-transitive. 
  
\end{enumerate}
Then,
$\mathrm{Aut}\bigl(\mathrm{Cay}(D_{2p},S)\bigr)\cong \bigl(R(\mathbb {Z}_p)\rtimes H\bigr)\wr \mathbb{Z}_2$
where $\wr$ denotes the wreath product.
}
\end{thm}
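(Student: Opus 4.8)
\emph{Overall plan and Step 1 (reduce to two copies of $\Gamma$).} The plan is to combine the structural decomposition of Proposition~\ref{Proposition 3.1} with the automorphism computation of Lemma~\ref{Lemma 3.2}, and then invoke the standard description of the automorphism group of the disjoint union of two copies of a connected graph as a wreath product. First I would observe that hypothesis~(3) is exactly the condition $d=\gcd(p,a_1,\dots,a_k)=1$ appearing in Proposition~\ref{Proposition 3.1}, and that, $p$ being prime, it already forces $T$ to be a generating set of $\mathbb{Z}_p$ (if $T=\{0\}$ then every $a_i\equiv 0$ and $d=p$). I would also note $|S|<p$: otherwise $S$ would be the set of all non-identity rotations, $\Gamma\cong K_p$ would be $2$-transitive, and~(2) would fail. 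Hence Proposition~\ref{Proposition 3.1} applies (its parts (1) and (2) use only $S\subseteq R$ and $d=1$) and gives $\mathrm{Cay}(D_{2p},S)=\Gamma_R+\Gamma_F$, where $\Gamma_R,\Gamma_F$ are the subgraphs induced on the rotations and on the reflections, each isomorphic to the \emph{connected} graph $\Gamma=\mathrm{Cay}(\mathbb{Z}_p,T)$. So $\mathrm{Cay}(D_{2p},S)$ is a disjoint union of exactly two connected components, each a copy of $\Gamma$.

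\emph{Step 2 (identify $\mathrm{Aut}(\Gamma)$ and the disjoint union).} Hypotheses~(1), (2) together with $T$ being a generating set are precisely the hypotheses of Lemma~\ref{Lemma 3.2}, so $\mathrm{Aut}(\Gamma)\cong R(\mathbb{Z}_p)\rtimes H$. Next I would prove the general fact that for a connected graph $X$ with at least one edge one has $\mathrm{Aut}(X+X)\cong \mathrm{Aut}(X)\wr C_2$: any $\phi\in\mathrm{Aut}(X+X)$ permutes the connected components, and since there are exactly two, both isomorphic to the connected graph $X$, $\phi$ either fixes each component setwise or interchanges them. The component-preserving automorphisms form a subgroup isomorphic to $\mathrm{Aut}(X)\times\mathrm{Aut}(X)$ (restrict $\phi$ to each component), of index $2$ and hence normal; a fixed component-swap $\sigma$ generates a complement $\langle\sigma\rangle\cong C_2$, and conjugation by $\sigma$ interchanges the two coordinates, so by Definition~\ref{Definition 2.3} the group is the semidirect product $\mathrm{Aut}(X)\wr C_2$. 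Here $X=\Gamma$ has an edge because $T\neq\emptyset$.

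\emph{Step 3 (conclude) and the main difficulty.} Combining Steps~1 and~2 with $X=\Gamma$ yields $\mathrm{Aut}\bigl(\mathrm{Cay}(D_{2p},S)\bigr)\cong \mathrm{Aut}(\Gamma)\wr C_2\cong \bigl(R(\mathbb{Z}_p)\rtimes H\bigr)\wr C_2$, as required. I do not anticipate a genuine obstacle: once Proposition~\ref{Proposition 3.1} and Lemma~\ref{Lemma 3.2} are available, the argument is routine. The only point deserving a careful sentence is the claim used in Step~2 that no automorphism of $\Gamma+\Gamma$ can carry part of one component into the other; this is immediate from connectedness of the components together with the fact that there are only two of them, so the difficulty is minimal.
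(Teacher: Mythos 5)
Your proposal is correct and follows essentially the same route as the paper: decompose $\mathrm{Cay}(D_{2p},S)$ via Proposition~\ref{Proposition 3.1} into two connected copies of $\Gamma=\mathrm{Cay}(\mathbb{Z}_p,T)$, apply Lemma~\ref{Lemma 3.2} to get $\mathrm{Aut}(\Gamma)\cong R(\mathbb{Z}_p)\rtimes H$, and identify $\mathrm{Aut}(\Gamma+\Gamma)$ with the wreath product $\mathrm{Aut}(\Gamma)\wr C_2$. The only difference is that you spell out details the paper leaves implicit (that $T$ generates $\mathbb{Z}_p$, that $|S|<p$, and the proof of the wreath-product identity for two connected components), which is fine but does not change the argument.
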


\begin{proof}
Since \(p\) is prime and \(S\subseteq R_{p}\setminus\{e\}\), $a_j\not\equiv 0 \pmod p$ for any $j=1,...,k$. Hence each \(a_j\) is a nonzero element of the additive group \(\mathbb Z_p\), so \(\gcd(p,a_j)=1\) and \(\langle a_j\rangle=\mathbb Z_p\).  In particular, \(T\) generates \(\mathbb Z_p\).  So, $\Gamma$ is connected.
By Proposition \ref{Proposition 3.1}, the graph $\mathrm{Cay}(D_{2p},S)$ is the disjoint union of two components, each isomorphic to the connected circulant graph $\Gamma$.
Moreover,
$\mathrm{Aut}(\mathrm{Cay}(D_{2p},S))\cong (\mathrm{Aut}(\Gamma))\wr S_2$ where $S_{2}$ is the symmetric group on $2$ elements.
By Lemma \ref{Lemma 3.2}, we have $\mathrm{Aut}(\Gamma)\cong R(\mathbb{Z}_p)\rtimes H$.
Since $S_2\cong \mathbb{Z}_2$, we obtain 
$\mathrm{Aut}(\mathrm{Cay}(D_{2p},S))\cong (R(\mathbb{Z}_p)\rtimes H)\wr \mathbb{Z}_2$.
 \end{proof}
\begin{thm}\label{Theorem 3.7}
{\em
Fix an integer $k\ge 2$, and distinct integers $t_1,\dots,t_{k-1}\ge 2$.
Let 
\begin{enumerate}
    \item $M=\max\{1,t_1,\dots,t_{k-1}\}$, 
    \item $Q=\max_{a,b\in\{1,t_1,\dots,t_{k-1}\}} \bigl( a b + M \bigr)$, and
    \item $p>Q$ be a prime.
\end{enumerate}
 
Let $T=\{\pm 1, \pm t_{1},\dots,\pm t_{k-1}\}\subset \mathbb{Z}_p$, and let $S=\{r^{\pm 1}, r^{\pm t_1},\dots,r^{\pm t_{k-1}}\}$ be a set of distinct, non-identity rotations in $D_{2p}$.
Then $\mathrm{Aut}\bigl(\mathrm{Cay}(D_{2p},S)\bigr) \cong \bigl(R(\mathbb{Z}_p)\rtimes H\bigr)\wr \mathbb{Z}_2$,
where $H=\langle p-1\rangle=\{1,p-1\}\subset\mathbb{Z}_p^\times$.

}
\end{thm}

\begin{proof}
Since $1\in T$, we have $\langle T\rangle=\langle 1\rangle=\mathbb Z_p$, so $\Gamma=\mathrm{Cay}(\mathbb Z_p,T)$ is connected.
In view of Theorem \ref{Theorem 3.6}, it is enough to show that $T$ is invariant under the action of $H$ but not under any larger subgroup of $\mathrm{Aut}(\mathbb{Z}_p)$ and 
the action of $\mathrm{Aut}(\Gamma)$ on the set of vertices is not 2-transitive. We proceed by verifying these properties.

\begin{claim}\label{Claim 3.8}
{\em $T$ is invariant under the action of $H$}.
\end{claim}

\begin{proof}
For the units $1,-1\in \mathbb{Z}_{p}^{*}$, $1T=T$ and $(-1)T=T$.
 \end{proof}

\begin{claim}\label{Claim 3.9}
{\em $T$ is not invariant under any subgroup of $\mathrm{Aut}(\mathbb{Z}_p)$ larger than $H$}.
\end{claim}

\begin{proof}
Suppose, for contradiction, there exists
$m\in\mathbb{Z}_p^\times\setminus\{1,p-1\}\text{ with } mT=T$.
As $1\in T$, we must have $m\in T$. Hence $m\in\{\pm t_a\}$ for some $a\in\{1,\dots,k-1\}$.
Consider the case $m=t_a$ (the $m=-t_a$ case is identical up to signs). Then
\[
mT=\{\,t_a,-t_a,t_a^2,-t_a^2,t_a t_b,-t_a t_b\ (b=1,\dots,k-1)\,\}.
\]
Since we have $mT=T$, each element on the left must equal (mod $p$) one of the elements of $T=\{\pm1,\pm t_1,\dots,\pm t_{k-1}\}$.
In particular, $t_a^2$ is congruent modulo $p$ to some $s\in T$.
But for every such $s$ we have 
\[
0<|t_a^2-s|\le t_a^2 + M \le Q < p.
\]
Hence, we have $t_a^2\equiv s \pmod p$.  This is impossible since $t_a^2\neq\pm1$ and  $t_a^2\neq \pm t_b$ (as $t_a^2>t_a\ge t_b$ except in degenerate coincidence which our inequality rules out).  
Similarly, each product $t_a t_b$ appearing in $mT$ cannot equal any element of $T$ by the same magnitude bound and hence cannot be congruent to an element of $T$ modulo $p$.
Therefore no such $m$ exists, a contradiction.
 \end{proof}

\begin{claim}\label{Claim 3.10}
{\em $\mathrm{Aut}(\Gamma)_{0}=\{m \in \mathbb{Z}_p^{\times} : mT = T\}=\{1,-1\}$.}
\end{claim}

\begin{proof}
By the arguments in the proof of Lemma \ref{Lemma 3.2},  $\Gamma$ 
is normal and $\mathrm{Aut}(\mathbb{Z}_p,T)=H=\{1,-1\}$.
By Fact \ref{Fact 2.9} (1),
we obtain
$\mathrm{Aut}(\Gamma)_0=\mathrm{Aut}(\mathbb{Z}_p,T)
   =\{1,-1\}$.
 \end{proof}

\begin{claim}\label{Claim 3.11}
{\em If $\Gamma = \mathrm{Cay}(\mathbb{Z}_p,T)$, the action of $\mathrm{Aut}(\Gamma)$ on $\mathbb{Z}_{p}$ is not 2-transitive.
}
\end{claim}

\begin{proof}
If the action of $A=\mathrm{Aut}(\Gamma)$ on the vertex set $\mathbb{Z}_{p}$ is 2-transitive, then for any fixed point $x_0$ the stabilizer $A_{x_0}=$Stab$_{A}(x_{0})$ acts transitively on the remaining $p-1$ vertices i.e., on all vertices of $\mathbb{Z}_{p}\backslash\{x_{0}\}$.  
Thus, for all $y_{1},y_{2}\in \mathbb{Z}_{p}\backslash\{x_{0}\}$, there exists $g\in A_{x_{0}}$ such that $g(y_{1})=y_{2}$.
Thus, Orb$_{A_{x_{0}}}(y)=\{g(y):g\in A_{x_{0}}\}=\mathbb{Z}_{p}\backslash \{x_{0}\}$; so $\vert$Orb$_{A_{x_{0}}}(y)\vert=p-1$.
By the Orbit--Stabilizer Theorem,
\[
|A_{x_0}| = |\mathrm{Orb}_{A_{x_0}}(y)| \cdot |(A_{x_0})_y| 
          = (p-1)\cdot |(A_{x_0})_y|,
\]
so $|A_{x_0}|$ is a multiple of $p-1$. 
In particular, $|A_{x_0}| \ge p-1$.
By Claim \ref{Claim 3.10}, we have $|A_{x_0}|= 2$.
Since $p>Q\geq 3$ because each $t_{i}\geq 2$, this is impossible.
 \end{proof}
 \end{proof}

\section{Only reflections}

Ahmad Fadzil--Sarmin--Erfanian \cite[Proposition 2]{ASE2019} proved that if $n\geq 3$ and $S$ contains all $n$ reflections of $D_{2n}$, then $\text{Cay}(D_{2n}, S)=K_{n,n}$. 

\begin{prop}\label{Proposition 4.1}
{\em
Fix $n,k\ge 4$, and let $S\subseteq D_{2n}$ be a set of $k$ reflections.  
Then $\mathrm{Cay}(D_{2n},S)$ is complete bipartite if and only if $k=n$ and $S$ consists of all reflections of $D_{2n}$.  
In this case, $\mathrm{Cay}(D_{2n},S)\cong K_{n,n}$.}
\end{prop}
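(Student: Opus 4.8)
# Proof Proposal for Proposition 4.1

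The plan is to analyze when a Cayley graph on $D_{2n}$ generated by $k$ reflections can be complete bipartite, using the structural observation that reflections, as generators, produce only inter-layer edges. First I would recall the basic incidence pattern: for a reflection $sr^a \in S$ and a vertex $g$, the edge $\{g, g\cdot sr^a\}$ joins a rotation to a reflection (since left-multiplying an element of $R$ by a reflection lands in $F$, and vice versa). Hence $\Gamma = \mathrm{Cay}(D_{2n},S)$ is automatically bipartite with the two parts being $R$ and $F$, each of size $n$. So any complete bipartite structure must be $K_{n,n}$, which pins down the only candidate and reduces the problem to: when is every vertex of $R$ adjacent to every vertex of $F$?

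Next I would count degrees. Since $|S| = k$ and all elements of $S$ are involutions, the graph is $k$-regular. For $\Gamma$ to equal $K_{n,n}$, every vertex must have degree $n-1$... wait, no — in $K_{n,n}$ every vertex has degree $n$, not $n-1$ (the Crown graph of Definition~\ref{Definition 2.1} has degree $n-1$). So the necessary condition is $k = n$. This handles the "only if" direction for the degree: if $\Gamma \cong K_{n,n}$ then $k = n$. It remains to show that $k = n$ forces $S$ to be \emph{all} reflections. But $D_{2n}$ has exactly $n$ reflections $\{sr^0, sr^1, \dots, sr^{n-1}\}$, and $S$ is a set of $k = n$ distinct reflections, so $S$ must be the full set $F$. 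This direction is essentially immediate once the degree count is in place.

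For the "if" direction — assuming $k = n$ and $S = F$ — I would invoke the result of Ahmad Fadzil--Sarmin--Erfanian quoted just above the proposition, which states precisely that $\mathrm{Cay}(D_{2n}, S) = K_{n,n}$ when $S$ consists of all $n$ reflections. Alternatively, to keep the argument self-contained, I would verify directly: fix a rotation $r^i$ and a reflection $sr^j$; they are adjacent iff $r^i (sr^j)^{-1} = r^i sr^j = s r^{j-i} \in S = F$, which always holds. So every rotation is adjacent to every reflection, and there are no intra-layer edges, giving exactly $K_{n,n}$. The "moreover" clause is then just a restatement of this conclusion.

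The only mild subtlety — and the step I would be most careful with — is making sure the degree bookkeeping is airtight: one must confirm that when $k < n$ the graph genuinely fails to be complete bipartite (not merely that it has the wrong degree, but that no relabeling could fix it), which follows because $K_{n,n}$ is $n$-regular on $2n$ vertices and $\Gamma$ is $k$-regular on $2n$ vertices, so $k = n$ is forced by vertex count and regularity alone, independent of any isomorphism choice. I would also note in passing that $k \le n$ always, since there are only $n$ reflections, so the case $k = n$ is the extremal one. No serious obstacle is expected here; the proposition is a clean consequence of the bipartite layer structure plus a degree count.
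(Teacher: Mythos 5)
Your proposal is correct and follows essentially the same route as the paper: the $k$-regularity of $\Gamma$ forces a complete bipartite target to be $K_{k,k}$ on $2n$ vertices, hence $k=n$, and the converse is obtained from the Ahmad Fadzil--Sarmin--Erfanian result (or the direct adjacency check). The only cosmetic difference is that you conclude $S=F$ by counting ($n$ distinct reflections among only $n$ available), whereas the paper argues via the neighborhood of the identity; both are immediate.
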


\begin{proof}
Suppose $\Gamma=(V(\Gamma),E(\Gamma))=\text{Cay}(D_{2n}, S)$ is complete bipartite, say $K_{m_{1},m_{2}}$. Since $\Gamma$ is $k$-regular, $m_{1}=m_{2}=k$. Thus, $\vert V(\Gamma)\vert=2n=2k$. 
Since $\Gamma=K_{k,k}$, every vertex in $R_{n}$ is adjacent to every vertex of $F_{n}$. The neighbors of the identity $e$ are the generators in $S$. For $e$ to be connected to all $k$ reflections $s,sr,...,sr^{k-1}$, the set $S$ must contain all of these reflections.
Conversely, if $S$ contains all reflections, then \cite[Proposition 2]{ASE2019}
implies $\text{\em Cay}(D_{2n},S)\cong K_{n,n}$.
 \end{proof}
We generalize \cite[Proposition 2]{ASE2019} due to Ahmad Fadzil, Sarmin, and Erfanian.
\begin{prop}\label{Proposition 4.2}
{\em Fix $k\leq n$. Let \(S=\{sr^{a_{1}},\dots ,sr^{a_{k}}\}\subseteq D_{2n}\) be a generating set of distinct reflections. Let \(M_{a_{j}}=\bigl\{\{r^{i},sr^{\,a_{j}-i}\}:i\in \mathbb{Z}_{n}\bigr\}\) be a collection of edges for each \(j\in \{1,\dots ,k\}\) and \(\Gamma =\mathrm{Cay}(D_{2n},S)\). The following holds:
\begin{enumerate}
\item Each \(M_{a_{j}}\) is a perfect matching between $R_{n}$ and $F_{n}$.
\item The matchings $M_{a_j}$ and $M_{a_\ell}$ are edge-disjoint whenever $a_j\not\equiv a_\ell\pmod n$.

\item $\Gamma$ is bipartite with bipartitions $R_{n}$ and $F_{n}$, and its edge set decomposes as 
$
E(\Gamma) = \bigcup_{j=1}^{k} M_{a_j},
$
the disjoint union of $k$ perfect matchings.
\end{enumerate}    
}
\end{prop}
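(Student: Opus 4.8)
The plan is to work directly from the definition of the Cayley graph on $D_{2n}$ and track how each reflection generator $sr^{a_j}$ acts. First I would prove (1). Fix $j$. A reflection generator $g = sr^{a_j}$ produces, for each vertex $x \in D_{2n}$, the edge $\{x, xg\}$ (note $g = g^{-1}$, so each such generator contributes one edge per vertex, i.e.\ $n$ edges total on $2n$ vertices). If $x = r^i$ is a rotation, then $xg = r^i s r^{a_j} = s r^{a_j - i} \in F$; conversely if $x = sr^i \in F$, then $xg = sr^i s r^{a_j} = r^{a_j - i} \in R$, which is the same edge read from the other endpoint. So every edge from $sr^{a_j}$ joins $R$ to $F$, and the edge set contributed is exactly $\{\{r^i, sr^{a_j - i}\} : i \in \mathbb{Z}_n\} = M_{a_j}$. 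To see $M_{a_j}$ is a perfect matching: the rotation endpoints $r^i$ run over all of $R$ as $i$ ranges over $\mathbb{Z}_n$, each occurring once; the reflection endpoints $sr^{a_j - i}$ also run over all of $F$ bijectively (since $i \mapsto a_j - i$ is a bijection of $\mathbb{Z}_n$). Hence every vertex of $\Gamma$ lies in exactly one edge of $M_{a_j}$.

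Next, (2): suppose $\{r^i, sr^{a_j - i}\} \in M_{a_j}$ equals an edge $\{r^{i'}, sr^{a_\ell - i'}\} \in M_{a_\ell}$. Since these edges each have one endpoint in $R$ and one in $F$, matching them forces $r^i = r^{i'}$ and $sr^{a_j - i} = sr^{a_\ell - i'}$, hence $i \equiv i' \pmod n$ and then $a_j - i \equiv a_\ell - i' \equiv a_\ell - i \pmod n$, giving $a_j \equiv a_\ell \pmod n$. Contrapositively, if $a_j \not\equiv a_\ell \pmod n$ the matchings share no edge. Since the $a_j$ are pairwise distinct mod $n$ (the reflections $sr^{a_j}$ are distinct), the $k$ matchings $M_{a_1}, \dots, M_{a_k}$ are pairwise edge-disjoint.

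Finally, (3) assembles the pieces. By (1), every generator in $S$ is a reflection, so every edge of $\Gamma$ joins $R$ to $F$; thus $\Gamma$ is bipartite with parts $R$ and $F$ (here I would remark that $R$ and $F$ are each nonempty since $S$ generates $D_{2n}$, forcing $n \ge 1$ and $k \ge 1$, and the hypothesis $k \ge 4$ is more than enough). The edge set of $\Gamma$ is by definition $\bigcup_{j=1}^k \{\{x, x\,sr^{a_j}\} : x \in D_{2n}\}$, which by the computation in (1) equals $\bigcup_{j=1}^k M_{a_j}$; by (2) this union is disjoint. Hence $E(\Gamma) = \bigsqcup_{j=1}^k M_{a_j}$ is a decomposition into $k$ perfect matchings, and $\Gamma$ is a $k$-regular bipartite graph on $2n$ vertices. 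None of this is genuinely hard — the only point requiring a little care is bookkeeping the identification of an edge with its reverse and confirming that a single reflection generator yields exactly $n$ distinct edges (not $2n$), which is what makes $M_{a_j}$ a \emph{perfect} matching rather than a union of $2$-cycles; I would state this explicitly to avoid the off-by-a-factor-of-two trap.
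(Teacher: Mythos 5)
Your proposal is correct and follows essentially the same route as the paper: the same computation $r^i\cdot sr^{a_j}=sr^{a_j-i}$ (with the reflection-endpoint case giving the same edge read backwards) establishes that each generator contributes exactly the perfect matching $M_{a_j}$, the same endpoint-comparison argument gives edge-disjointness, and part (3) is the same assembly of $E(\Gamma)=\bigcup_j M_{a_j}$ with bipartiteness following because every generator is a reflection. Your explicit remark about the edge/reverse-edge identification (one edge per vertex pair, not $2n$ edges) is a nice touch but does not change the substance.
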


\begin{proof}
(1).
Fix $1\leq j\leq k$. Consider the bijection
$\varphi_j : R_{n} \to F_{n}$ given by $\varphi_j(r^i) = r^{i}sr^{a_{j}} = sr^{a_j-i}$.    
Then $\varphi_j^{-1}(sr^k) = r^{a_j-k}$.
Thus $M_{a_j}$ pairs each $r^i \in R_{n}$ with $\varphi_j(r^i) \in F_{n}$, and every vertex of $R_{n}$ and $F_{n}$ appears in exactly one pair. 

(2). For the sake of contradiction, suppose $\{r^i,sr^{a_j-i}\}=\{r^t,sr^{a_l-t}\}$ for some $i,t\in \mathbb{Z}_{n}$.
Then $r^i=r^t$ and thus $i\equiv t\pmod n$. 
Furthermore, 
$sr^{a_j-i}=sr^{a_l-t}$ implies $a_{j}-i\equiv a_{l}-t \pmod n$.
Thus, $a_j\equiv a_\ell\pmod n$. 

(3). 
In order to show that $E(\Gamma) = \bigcup_{j=1}^k M_{a_j}$, it suffices to show that $E(\Gamma) \subseteq \bigcup_{j=1}^k M_{a_j}$ and $M_{a_j}\subseteq E(\Gamma)$ for each $1\leq j\leq k$.

\begin{claim}\label{Claim 4.3}
{\em $E(\Gamma) \subseteq \bigcup_{j=1}^k M_{a_j}$.}
\end{claim}

\begin{proof}
By the definition of $\Gamma$, for any $g \in D_{2n}$ and $x \in S$ there is an edge $\{ g, gx \}$.  
If $x = sr^{a_j}$ and $g = r^i \in R_{n}$, then
$gx = r^i (sr^{a_j}) = sr^{a_j-i} \in F_{n}$,
so the edge $\{ r^i, sr^{a_j-i}\}$ lies in $M_{a_j}$.  
If the edge starts from a reflection vertex $g=sr^{k}\in F_{n}$, and is generated by $x=sr^{a_{j}}\in S$, then $gx=(sr^{k})(sr^{a_{j}})=(sr^{k}s)r^{a_{j}}=(sr^{k}s^{-1})r^{a_{j}}=r^{-k}r^{a_{j}} =r^{a_{j}-k}\in R_{n}$.
We claim that $\{sr^{k},r^{a_{j}-k}\}\in M_{a_{j}}$.
Let \(i=a_{j}-k\) and consider $r^{i}\in R_{n}$. Since $sr^{a_{j}-i}=sr^{a_{j}-(a_{j}-k)}=sr^{k}$, the edge $\{r^{a_{j}-k},sr^{k}\}$ lies in $M_{a_{j}}$, and this is the same edge as $\{sr^{k},r^{a_{j}-k}\}$. 
 \end{proof}

\begin{claim}\label{Claim 4.4}
{\em $M_{a_j}\subseteq E(\Gamma)$ for each $1\leq j\leq k$}.    
\end{claim}

\begin{proof}
In $\Gamma$, two vertices $g,h$ are adjacent
if $g^{-1}h\in S$.  
For each $i$ and each $j$, we have
$
(r^i)^{-1}(s r^{a_j-i})=r^{-i}s r^{a_j-i}
= r^{-i}\bigl(r^{-(a_j-i)}s\bigr)=r^{-a_j}s=s r^{a_j}\in S,
$
using $s r^t=r^{-t}s$. 
Hence $\{r^{i},\, sr^{a_j-i}\}\in E(\Gamma)$, and thus $M_{a_j}\subseteq E(\Gamma)$.
 \end{proof}

So all edges of $\Gamma$ are between $R_{n}$ and $F_{n}$ and $\Gamma$ is bipartite.
\medskip
 \end{proof}

\begin{figure}[h]
\centering
\begin{tikzpicture}[
    scale=0.9,
    every node/.style={circle,draw,fill=white,inner sep=1.5pt,minimum size=10pt,font=\small},
    edge/.style={line width=0.5pt}
]

\def\n{6}

\def\dx{1.5}
\def\dy{1.5}

\foreach \i in {1,...,\n} {
  \node (x\i) at (\i*\dx, \dy) {$x_{\i}$};
}

\foreach \i in {1,...,\n} {
  \node (y\i) at (\i*\dx, 0) {$y_{\i}$};
}

\foreach \i in {1,...,\n} {
  \foreach \j in {1,...,\n} {
    \ifnum\i=\j
    \else
      \draw[edge] (x\i) -- (y\j);
    \fi
  }
}

\end{tikzpicture}

\caption{\em The graph $\mathrm{Cay}(D_{12},S)$ is the 6-Crown graph if $S$ consists of any $5$ reflections.}
\end{figure}
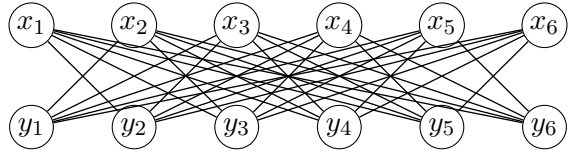
\begin{corr}\label{Corollary 4.5}
{\em Let
$S$ be a set of $n-1$ reflections from $D_{2n}$. Then $\mathrm{Cay}(D_{2n}, S)\cong$ n-Crown graph.
}
\end{corr}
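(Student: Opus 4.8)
The plan is to deduce the statement from Proposition \ref{Proposition 4.2} together with the identification $\mathrm{Cay}(D_{2n},F)\cong K_{n,n}$ coming from Proposition \ref{Proposition 4.1}. Write $S=\{sr^{a_1},\dots,sr^{a_{n-1}}\}$ with $a_1,\dots,a_{n-1}\in\mathbb{Z}_n$ distinct; since $D_{2n}$ has exactly $n$ reflections, there is a unique residue $m\in\mathbb{Z}_n$ with $sr^m\notin S$, so $\{a_1,\dots,a_{n-1}\}=\mathbb{Z}_n\setminus\{m\}$. First I would check that $S$ is a generating set of $D_{2n}$ (this is needed to invoke Proposition \ref{Proposition 4.2}): for any two of the reflections we have $(sr^{a_i})(sr^{a_j})=r^{a_j-a_i}\in\langle S\rangle$, so $\langle S\rangle$ contains $r^{a_j-a_i}$ for all $i,j$. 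Removing a single residue from the cycle $\mathbb{Z}_n$ (with $n\ge 3$) always leaves two available residues differing by exactly $1$, hence $r\in\langle S\rangle$; combined with $s=(sr^{a_1})r^{-a_1}\in\langle S\rangle$ this gives $\langle S\rangle=D_{2n}$.

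Next I would apply Proposition \ref{Proposition 4.2} with $k=n-1\le n$: it yields that $\Gamma=\mathrm{Cay}(D_{2n},S)$ is bipartite with parts $R$ and $F$ and that $E(\Gamma)=\bigcup_{j=1}^{n-1}M_{a_j}$, a disjoint union of $n-1$ perfect matchings between $R$ and $F$, where $M_{a_j}=\{\{r^i,sr^{\,a_j-i}\}:i\in\mathbb{Z}_n\}$. Applying the same proposition with $k=n$ to the full reflection set $F$ (which generates $D_{2n}$, and for which $\mathrm{Cay}(D_{2n},F)\cong K_{n,n}$ by Proposition \ref{Proposition 4.1}) shows that $\{M_a:a\in\mathbb{Z}_n\}$ is a partition of $E(K_{n,n})$ into $n$ perfect matchings, i.e.\ a $1$-factorization of $K_{n,n}$ on the parts $R,F$. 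Therefore $E(\Gamma)=E(K_{n,n})\setminus M_m$: deleting the single generator $sr^m$ from $F$ deletes exactly the $1$-factor $M_m$ and nothing else, so $\Gamma$ equals $K_{n,n}$ (on parts $R$ and $F$) with the perfect matching $M_m$ removed.

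Finally I would identify this with the $n$-Crown graph of Definition \ref{Definition 2.1}. Relabel the vertex set: for $i\in\{0,\dots,n-1\}$ put $x_{i+1}:=r^{i}$ and $y_{i+1}:=sr^{\,m-i}$ (indices of $r$ and $sr$ read modulo $n$). This is a bijection $D_{2n}\to\{x_1,\dots,x_n,y_1,\dots,y_n\}$ carrying $R$ onto $\{x_\ell\}$, $F$ onto $\{y_\ell\}$, and the removed matching $M_m$ onto the diagonal matching $\{\{x_\ell,y_\ell\}:1\le\ell\le n\}$; under it $K_{n,n}$ on $(R,F)$ becomes $K_{n,n}$ on $(\{x_\ell\},\{y_\ell\})$, so $\Gamma$ is carried onto $K_{n,n}$ with the diagonal perfect matching removed, which is precisely the $n$-Crown graph. (Alternatively one may note $\mathrm{Aut}(K_{n,n})\supseteq S_n\times S_n$ acts transitively on the perfect matchings of $K_{n,n}$, since a $1$-factor is just a bijection between the two parts, so $K_{n,n}$ minus any perfect matching is isomorphic to $K_{n,n}$ minus the diagonal one.) I expect the only step needing genuine care to be the claim that the $M_a$ with $a\in\mathbb{Z}_n$ tile $E(K_{n,n})$ exactly — no overlaps and no omissions — which is exactly what Proposition \ref{Proposition 4.2}(2)--(3) applied with $k=n$, together with Proposition \ref{Proposition 4.1}, supplies; everything else is index bookkeeping and a standard relabeling.
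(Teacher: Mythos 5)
Your proposal is correct and follows essentially the same route as the paper: write $S=F\setminus\{sr^{m}\}$, invoke Proposition \ref{Proposition 4.2} to see $\Gamma$ as the disjoint union of the $n-1$ matchings $M_{a_j}$, and recognize this as $K_{n,n}$ minus the one missing matching, i.e.\ the $n$-Crown graph. You simply fill in details the paper leaves implicit — the check that $n-1$ reflections generate $D_{2n}$, the $1$-factorization argument showing exactly $M_m$ is absent, and the explicit relabeling onto Definition \ref{Definition 2.1} — which is fine.
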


\begin{proof}
Let
$S = F_{n}\setminus\{sr^{\,i_0}\}$ for some $i_{0}\in \mathbb{Z}_{n}$ where $F_{n}=\{sr^k: k\in\mathbb Z_n\}$.
Since $sr^{i_0+2},sr^{i_0+1}\in S$, their product $(sr^{i_0+1})(sr^{i_0+2})=r$ lies in $\langle S\rangle$, so $\langle S\rangle$ contains the full rotation subgroup $\langle r\rangle$. Hence $S$ generates $D_{2n}$.
By Proposition \ref{Proposition 4.2}, $\Gamma=\mathrm{Cay}(D_{2n}, S)$ is the union of $n-1$ perfect matchings between $R_{n}$ and $F_{n}$, that is, the $n$-Crown graph.
 \end{proof}

\subsection{Automorphism groups}

The next lemma relates the automorphism group of a normal, connected Cayley graph
$\mathrm{Cay}(D_{2n},S)$ with $S$ consisting of only reflections,
to the stabilizer of the exponents of the elements of $S$ under the action of $\mathrm{AGL}(1,n)$.

\begin{lem}\label{Lemma 4.6}
{\em Let $n \ge 5$ be any integer and $4\leq k<n$. 
Let $S = \{r^{a_{1}}s,... r^{a_{k}}s\} \subseteq D_{2n}$
be a set of distinct reflections, 
$A = \{a_{1}, ..., a_{k}\} \subseteq \mathbb{Z}_{n}$, and 
$\Delta = \{a_{i} - a_{j} \mid 1 \le i,j \le k\}$.
Assume that the following hold:
\begin{enumerate}[(i)]
    \item \(\Gamma = \mathrm{Cay}(D_{2n}, S)\) is normal,
    \item $d=\gcd(n,a_i-a_j, 1\leq i<j\leq k)=1$.
\end{enumerate}
Then
$
\mathrm{Aut}(\Gamma)
\cong R(D_{2n}) \rtimes K$ where $K=
\bigl\{(u,v) \in (\mathbb{Z}_{n})^{\times} \ltimes \mathbb{Z}_{n}: uA + v = A\bigr\}
$.
}
\end{lem}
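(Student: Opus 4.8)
The plan is to exploit the normality hypothesis together with Fact~\ref{Fact 2.9}(1) and Definition~\ref{Definition 2.8}, which reduce the problem to computing $\mathrm{Aut}(D_{2n},S)$, the group of group-automorphisms of $D_{2n}$ fixing the set $S$ setwise. Indeed, since $\Gamma$ is normal we have $\mathrm{Aut}(\Gamma)=R(D_{2n})\rtimes\mathrm{Aut}(D_{2n},S)$, so it suffices to identify $\mathrm{Aut}(D_{2n},S)$ with the claimed group $\{(u,v)\in(\mathbb{Z}_n)^\times\ltimes\mathbb{Z}_n: uA+v=A\}$. The first step is to recall the structure of $\mathrm{Aut}(D_{2n})$ for $n\ge 3$: every automorphism $\phi$ of $D_{2n}$ is determined by $\phi(r)=r^u$ for some $u\in(\mathbb{Z}_n)^\times$ and $\phi(s)=sr^v$ for some $v\in\mathbb{Z}_n$, giving $\mathrm{Aut}(D_{2n})\cong(\mathbb{Z}_n)^\times\ltimes\mathbb{Z}_n\cong\mathrm{Hol}(\mathbb{Z}_n)$. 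Here the semidirect product structure is exactly that of $\mathrm{AGL}(1,n)$ acting on $\mathbb{Z}_n$, which is why the statement is naturally phrased in terms of affine maps.

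Next I would compute the action of such a $\phi=\phi_{u,v}$ on a reflection $sr^{a}$. Using $\phi(r)=r^u$ and $\phi(s)=sr^v$ one gets $\phi(sr^a)=\phi(s)\phi(r)^a=sr^v r^{ua}=sr^{ua+v}$. Thus $\phi_{u,v}$ sends the reflection with exponent $a$ to the reflection with exponent $ua+v$; in other words, on the level of exponents $\phi_{u,v}$ acts as the affine map $x\mapsto ux+v$ on $\mathbb{Z}_n$. Consequently $\phi_{u,v}(S)=S$ if and only if $\{ua_i+v : 1\le i\le k\}=\{a_1,\dots,a_k\}$, i.e. $uA+v=A$. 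This immediately yields $\mathrm{Aut}(D_{2n},S)=\{(u,v)\in(\mathbb{Z}_n)^\times\ltimes\mathbb{Z}_n: uA+v=A\}$, and combining with normality finishes the proof. One should note here that the hypothesis $d=\gcd(n,a_i-a_j)=1$ guarantees (via Proposition~\ref{Proposition 4.2} and the remark that $S$ generates) that $\Gamma$ is connected, which is what makes the normality hypothesis meaningful and consistent; it may also be needed to ensure the whole automorphism $R(D_{2n})\rtimes\mathrm{Aut}(D_{2n},S)$ is the full group rather than merely a subgroup, but that is already packaged into hypothesis~(i).

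The main obstacle, and the place to be careful, is not any single calculation but rather making sure the description of $\mathrm{Aut}(D_{2n})$ is correct and complete for all $n\ge 5$ — in particular that there are no extra automorphisms and that the multiplication in $\mathrm{Aut}(D_{2n})$ really matches the semidirect product $(\mathbb{Z}_n)^\times\ltimes\mathbb{Z}_n$ with the intended action (so that the symbol $\ltimes$ in the statement is unambiguous). A secondary subtlety is verifying that $\Delta$ (equivalently the condition $uA+v=A$) is well-defined and that the set $\{(u,v): uA+v=A\}$ is genuinely a subgroup of $\mathrm{Hol}(\mathbb{Z}_n)$: closure follows because composing two exponent-preserving affine maps preserves $A$, and it contains the identity $(1,0)$; inverses exist because $uA+v=A$ with $u$ a unit forces $u^{-1}A-u^{-1}v=A$. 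Once these structural points are pinned down, the theorem follows by the short chain: normality $\Rightarrow$ $\mathrm{Aut}(\Gamma)=R(D_{2n})\rtimes\mathrm{Aut}(D_{2n},S)$ $\Rightarrow$ identify $\mathrm{Aut}(D_{2n},S)$ with the affine stabilizer of $A$ via the exponent computation above.
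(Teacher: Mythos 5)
Your proposal is correct and follows essentially the same route as the paper: use normality to reduce to $\mathrm{Aut}(D_{2n},S)$, parametrize automorphisms of $D_{2n}$ as $\psi_{u,v}$ with $(u,v)\in(\mathbb{Z}_n)^\times\ltimes\mathbb{Z}_n$, and observe that they act on reflection exponents as affine maps, so the stabilizer of $S$ is exactly the affine stabilizer of $A$. The only difference is the harmless convention of writing reflections as $sr^{a}$ rather than $r^{a}s$, which changes exponents by a sign but yields an isomorphic stabilizer, so the conclusion is unchanged.
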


\begin{proof}
Since $d=1$, $S$ generates $D_{2n}$ and $\Gamma$ is connected. Since $S$ contains only reflections, $S$ is symmetric.

\begin{claim}\label{Claim 4.7}
{\em $\mathrm{Aut}(D_{2n},S)\cong K$,
i.e. the stabiliser of $A$ in the affine group $\mathrm{AGL}(1,n)$.}
\end{claim}

\begin{proof}
Given
$\psi_{u,v}: r\mapsto r^u, s\mapsto r^v s$, there is a natural correspondence
\[
\Phi:\mathrm{Aut}(D_{2n},S)\longrightarrow \mathrm{AGL}(1,n),
\qquad 
\psi_{u,v}\longmapsto (u,v).
\]
We show that
$\psi_{u,v}\in \mathrm{Aut}(D_{2n},S)
\Longleftrightarrow uA+v=A$.
For $a_{i}\in A$, we have $\psi_{u,v}(r^{a_i}s)
 = (r^u)^{a_i} \, r^v s
 = r^{u a_i + v}s$.
Hence $\psi_{u,v}(S)
 = \{r^{u a_i + v}s : a_i \in A\}
 = \{r^{x}s : x \in uA + v\}$,
where $uA + v = \{u a + v : a \in A\} \subseteq \mathbb{Z}_n$.
So,
$
\psi_{u,v}(S) = S
\Longleftrightarrow 
\{r^{x}s : x \in uA + v\}
   = \{r^{x}s : x \in A\}
\Longleftrightarrow
uA + v = A$.
Hence, $\mathrm{Aut}(D_{2n},S)=
\{\psi_{u,v}:(u,v)\in(\mathbb Z_n)^\times\ltimes\mathbb Z_n,\; uA+v=A\}$
and $\mathrm{Aut}(D_{2n},S)\cong K$.
 \end{proof}

Since $\Gamma$ is normal, $\mathrm{Aut}(\Gamma)\cong R(D_{2n})\rtimes K$ by Claim \ref{Claim 4.7}.
 \end{proof}
\begin{thm}\label{Theorem 4.8}
{\em
Let $4\leq k<n$ be integers such that $\gcd(n,k)=1$. Let
$
S=\{r^{a_1}s,\dots,r^{a_k}s\}\subseteq G=D_{2n}
$
be a set of distinct reflections,
and $\Delta=\{a_i-a_j:1\le i<j\le k\}$.
Assume
\begin{enumerate}[(i)]
  \item $\Gamma=\mathrm{Cay}(D_{2n},S)$ is normal,
  \item $d=\gcd(n,a_i-a_j:1\leq i<j\leq k)=1$.
\end{enumerate}
Then
$\mathrm{Aut}(\Gamma)=R(D_{2n})\rtimes H$,
such that $H\leq (U_{0},\times)$ where $U_0:=\{u\in(\mathbb Z_n)^\times : u\Delta=\Delta\}$ and $\times$ is multiplication modulo $n$.
}
\end{thm}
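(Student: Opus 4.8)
The plan is to bootstrap from Lemma~\ref{Lemma 4.6}. Under hypotheses (i) and (ii), that lemma already delivers $\mathrm{Aut}(\Gamma)\cong R(G)\rtimes\widetilde H$, where $\widetilde H=\{(u,v)\in(\mathbb Z_n)^\times\ltimes\mathbb Z_n:uA+v=A\}$ is the setwise stabiliser of $A$ in $\mathrm{AGL}(1,n)$ (identified with $\mathrm{Aut}(G,S)$ via $\psi_{u,v}\leftrightarrow(u,v)$, as in Claim~\ref{Claim 4.7}). So the entire content of the theorem is to discard the translation coordinate, i.e.\ to show that $(u,v)\mapsto u$ embeds $\widetilde H$ into $U_0$, and to take $H$ to be its image. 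I would therefore restrict the standard projection $\pi\colon\mathrm{AGL}(1,n)\to(\mathbb Z_n)^\times$, $(u,v)\mapsto u$ (a group homomorphism whose kernel is the translations), to $\widetilde H$, and prove two things: $(a)$ $\pi|_{\widetilde H}$ is injective, and $(b)$ $\pi(\widetilde H)\subseteq U_0$. Granting these, $H:=\pi(\widetilde H)$ is a subgroup of $(U_0,\times)$, $\pi|_{\widetilde H}\colon\widetilde H\to H$ is a group isomorphism, and transporting the semidirect decomposition of Lemma~\ref{Lemma 4.6} along this isomorphism gives $\mathrm{Aut}(\Gamma)=R(G)\rtimes H$ with $H\le(U_0,\times)$.

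Step $(a)$ is the one genuinely new ingredient, and the only place where the hypothesis $\gcd(n,k)=1$ is used; the underlying idea is that an affine map stabilising the set $A$ must fix its ``barycentre''. Concretely, fix $(u,v)\in\widetilde H$ and put $\sigma:=\sum_{a\in A}a\in\mathbb Z_n$. Summing the identity $uA+v=A$ over the $k$ elements of $A$ yields $u\sigma+kv\equiv\sigma\pmod n$, hence $kv\equiv(1-u)\sigma\pmod n$. Since $\gcd(k,n)=1$, $k$ is invertible modulo $n$, so $v\equiv k^{-1}(1-u)\sigma\pmod n$ is completely determined by $u$. Consequently any two elements of $\widetilde H$ with the same first coordinate coincide, so $\pi|_{\widetilde H}$ is injective.

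For step $(b)$, take $(u,v)\in\widetilde H$: for all $a,a'\in A$ one has $u(a-a')=(ua+v)-(ua'+v)$ with $ua+v,\,ua'+v\in A$, so $u(a-a')$ is again a difference of two elements of $A$; applying the same to $(u,v)^{-1}=(u^{-1},-u^{-1}v)$ shows that multiplication by $u$ maps the difference set of $A$ onto itself, i.e.\ $u\Delta=\Delta$, whence $u\in U_0$. Here $\Delta$ should be read as the symmetric difference set $\{a_i-a_j:i\ne j\}$ (equal to its own negation), which is the reading under which step $(b)$ holds as stated: from $uA+v=A$ one obtains immediately that $u$ preserves $\Delta\cup(-\Delta)$, and one should pass to this symmetric hull, since a priori $u$ could interchange a difference with its negative. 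This orientation point, together with the injectivity argument of step $(a)$, is the main (and essentially only) place requiring care; everything else is routine manipulation inside $\mathrm{AGL}(1,n)$, already prepared in the proof of Lemma~\ref{Lemma 4.6}.
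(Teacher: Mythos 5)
Your proposal is correct and follows the same skeleton as the paper's proof: normality gives $\mathrm{Aut}(\Gamma)=R(G)\rtimes\mathrm{Aut}(G,S)$, Lemma~\ref{Lemma 4.6} (Claim~\ref{Claim 4.7}) identifies $\mathrm{Aut}(G,S)$ with the affine stabiliser $\widetilde H=\{(u,v):uA+v=A\}$, and one then projects via $\pi:(u,v)\mapsto u$, proving injectivity and that the image lies in $U_0$. The one genuinely different ingredient is your injectivity argument: you sum the set identity $uA+v=A$ over $A$ to get $kv\equiv(1-u)\sigma\pmod n$ with $\sigma=\sum_{a\in A}a$, so $\gcd(n,k)=1$ forces $v$ to be determined by $u$ (and $v=0$ when $u=1$). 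The paper instead proves triviality of $\ker\pi$ by an orbit count: if $A+v=A$ then $A$ is a union of $\langle v\rangle$-translation orbits, each of size the order $m$ of $v$, so $m\mid k$ and $m\mid n$, whence $m=1$ (Claim~\ref{Claim 4.10}, with the uniqueness of $v(u)$ relegated to a footnote in Claim~\ref{Claim 4.12}). Your barycentre computation is shorter and delivers that uniqueness for free, at the same price of hypothesis; both arguments are sound. Your caveat about $\Delta$ is also well taken: the theorem writes $\Delta=\{a_i-a_j:i<j\}$, which depends on the labelling and need not be symmetric, while the argument (both yours and the paper's Claim~\ref{Claim 4.9}, which silently treats $x'-y'$ for arbitrary $x',y'\in A$ as lying in $\Delta$) really shows that $u$ stabilises the full difference set $\{a_i-a_j:i\neq j\}$; reading $\Delta$ symmetrically, as in Lemma~\ref{Lemma 4.6}, makes the statement and both proofs agree.
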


\begin{proof}
Since $\Gamma$ is normal, we have $\mathrm{Aut}(\Gamma)=R(D_{2n})\rtimes\mathrm{Aut}(D_{2n},S)$. Let $A=\{a_1,\dots,a_k\}$. By the arguments in the proof of Lemma \ref{Lemma 4.6}, if \(\psi_{u,v}\) maps $r\mapsto r^u$ and $s\mapsto r^v s$ then
$\mathrm{Aut}(D_{2n},S)=
\{\psi_{u,v}:(u,v)\in(\mathbb Z_n)^\times\ltimes\mathbb Z_n,\; uA+v=A\}$.    
Let \(\pi:(\mathrm{Aut}(D_{2n},S),\circ)\to(\mathbb Z_n)^\times\) be the function that maps \(\psi_{u,v}\mapsto u\) where $\circ$ is the operation defined by $\psi_{u_{1},v_{1}}\circ\psi_{u_{2},v_{2}}=\psi_{u_{1}u_{2},v_{1}+u_{1}v_{2}}$ for any  $\psi_{u_{1},v_{1}},\psi_{u_{2},v_{2}}\in\mathrm{Aut}(D_{2n},S)$. 
Clearly, $\pi$ is a homomorphism.

\begin{claim}\label{Claim 4.9}
{\em $\pi(\mathrm{Aut}(D_{2n},S))\subseteq U_{0}$.
}
\end{claim}

\begin{proof}
If \(\psi_{u,v}\in\mathrm{Aut}(D_{2n},S)\) then $uA + v = A$. 
Thus, for every $x \in A$, there exists $y \in A$ such that $ux + v = y$, 
and conversely, for every $y \in A$, there exists $x \in A$ satisfying $ux + v = y$.
For any $x,y\in A$, there exists $x',y'\in A$ such that $x'=ux+v$ and $y'=uy+v$. Thus,
$x'- y'= (ux+v)-(uy+v)=u(x-y)$. Furthermore, $x'-y'\in\Delta$.
Thus, $u \delta \in \Delta$ for all $\delta \in \Delta$. 
So, $u\Delta \subseteq \Delta
\text{ where } 
u\Delta := \{\,u \delta : \delta \in \Delta\,\}$.
Since $u$ is a unit in $\mathbb{Z}_n$, it has a multiplicative inverse $u^{-1}\in (\mathbb{Z}_n)^\times$.  
By the same reasoning as above, we can see that
$u^{-1}\Delta \subseteq \Delta$. Multiplying both sides by $u$, we obtain $\Delta \subseteq u\Delta$.
Consequently, $u\Delta = \Delta$.
 \end{proof}

\begin{claim}\label{Claim 4.10}
{\em The kernel of $\pi$ is trivial, and so $\pi$ is an injective homomorphism.}
\end{claim}

\begin{proof}
The kernel of $\pi$ is $\mathrm{ker}(\pi)=\{\psi_{1,v}: \psi_{1,v}\in \mathrm{Aut}(D_{2n},S)\}$. If $\psi_{1,v}\in \mathrm{ker}(\pi)$, then $A+v=A$.  
Let $\mathcal{G}=(\mathbb{Z}_{n},+)$. Fix $v\in\mathbb Z_n$.  
Let $\langle v\rangle\le\mathcal G$ denote the cyclic subgroup generated by $v$, and let $\langle v\rangle$ act on $\mathbb Z_n$ by translations
$k\cdot x \equiv x + kv \pmod n, (k\in\mathbb Z)$.
\begin{subclaim}\label{subclaim 4.11}
{\em 
Let $m$ be the order of $v$ in $\mathcal G$, i.e. the smallest positive integer with $m v\equiv 0\pmod n$.
Then for every $x\in\mathbb Z_n$,
$|\mathrm{Orb}_{\langle v\rangle}(x)| = m$ and $m|n$.}
\end{subclaim}

\begin{proof}
The subgroup $\langle v\rangle=\{0,v,\dots,(m-1)v\}$ has $m$ elements and
$\mathrm{Orb}_{\langle v\rangle}(x)=\{x + g : g\in \langle v\rangle\}$.
Since $m$ is the least positive integer with $m v\equiv 0\pmod n$, the elements $x, x+v,\dots,x+(m-1)v$ are all distinct and
$x + m v \equiv x \pmod n$.
Thus, $|\mathrm{Orb}_{\langle v\rangle}(x)|=m$.
Since $\langle v\rangle\leq (\mathbb Z_n,+)$, Lagrange's theorem yields $m\mid n$.
 \end{proof}

Since $A=\bigcup_{x\in A} \{\mathrm{Orb}_{\langle v\rangle}(x)\}$ is a disjoint union of orbits, we have $|A|=tm=k$ if $A$ is the union of $t$-orbits, hence $m|k$. By Subclaim \ref{subclaim 4.11}, $m|n$ and thus $m=1$ since we assumed $\gcd (n,k)=1$. Therefore, $v=0$. So, the identity automorphism $\psi_{1,0}$ is the only element in $\ker(\pi)$.
 \end{proof}

\begin{claim}\label{Claim 4.12}
    {\em $(\mathrm{Aut}(D_{2n},S),\circ) \cong (\pi(\mathrm{Aut}(D_{2n},S)),\times)\leq (U_{0},\times)$.}
\end{claim}

\begin{proof}
By the arguments of Claim \ref{Claim 4.10}, there exists a unique $v(u)$ such that $uA + v(u) = A$ for any $u \in \pi(\mathrm{Aut}(D_{2n},S))$.\footnote{Indeed, if $uA+v=uA+v'$ then $uA = uA + (v-v')$. Applying $u^{-1}$ elementwise to both sides yields
$A = A + u^{-1}(v-v')$. Thus $u^{-1}(v-v')$ fixes $A$, and the orbit-count argument (as in Claim \ref{Claim 4.10}) forces $u^{-1}(v-v')\equiv 0\pmod n$, hence $v=v'$.}
Then $\psi_{u,v(u)} \in \mathrm{Aut}(D_{2n},S)$ and $\pi(\psi_{u,v(u)}) = u$, so $\pi: \mathrm{Aut}(D_{2n},S)\rightarrow \pi(\mathrm{Aut}(D_{2n},S))$ is surjective. By Claim \ref{Claim 4.10}, $\pi$ is an isomorphism.
Thus, 
$(\mathrm{Aut}(D_{2n},S),\circ) \cong (\pi(\mathrm{Aut}(D_{2n},S)),\times)$.
By Claim \ref{Claim 4.9}, we have $(\pi(\mathrm{Aut}(D_{2n},S)),\times)\leq (U_{0},\times)$.
 \end{proof}
This completes the proof of Theorem \ref{Theorem 4.8}.
 \end{proof}

\section{Sets with Both Rotations and Reflections}

\begin{prop}\label{Proposition 5.1}
{\em
Suppose there exist integers $a,b_1,b_2 \in \mathbb{Z}_n$ such that
$S = \{ r^{\pm a},\, sr^{b_1},\, sr^{b_2} \}$ where $a\not\equiv 0, n/2 \pmod n$. 
Let $T=\{\pm a\} \subset \mathbb{Z}_n$, 
$M_{b_j} = \bigl\{ \{r^i,\, sr^{b_j-i}\} : i \in \mathbb{Z}_n \bigr\}$ for $j\in\{1,2\}$, and $\Gamma = \mathrm{Cay}(D_{2n},S)$.
Then 
\begin{center} 
    $V(\Gamma)=R_{n}\cup F_{n},\quad and \quad E(\Gamma) = E(\mathrm{Cay}(\mathbb{Z}_n,T)) \cup E(\mathrm{Cay}(\mathbb{Z}_n,T)) \cup M_{b_1} \cup M_{b_2}$.
\end{center}

In particular, $\Gamma$ is obtained by taking two identical circulant layers (on $R_{n}$ and $F_{n}$) and adding the two inter-layer perfect matchings $M_{b_1},M_{b_2}$.
}
\end{prop}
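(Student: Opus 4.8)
The plan is to split $S$ into its rotation part $S_R=\{r^{\pm a}\}$ and its reflection part $S_F=\{sr^{b_1},sr^{b_2}\}$, observe that each generator contributes edges of only one type (intra-layer for rotations, inter-layer for reflections), and then assemble $E(\Gamma)$ from the three resulting pieces. The hypothesis $a\not\equiv 0,n/2\pmod n$ ensures that $S_R$ really consists of two distinct rotations (so $T=\{\pm a\}$ has two elements and $\mathrm{Cay}(\mathbb Z_n,T)$ is $2$-regular), and the standing assumption that $S$ contains two distinct reflections gives $b_1\not\equiv b_2\pmod n$; both facts are used so that the circulant layer and the two matchings are genuinely as described.

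First I would record that $V(\Gamma)=D_{2n}=R\cup F$ by definition. Next, exactly as in the proof of Proposition~\ref{Proposition 3.1}(1), for every $g\in D_{2n}$ the edge $\{g,gr^{\pm a}\}$ joins $g$ to an element of the same coset, since right multiplication by a rotation preserves $R$ and preserves $F$. Identifying $r^i\leftrightarrow i$ on $R$ and $sr^i\leftrightarrow i$ on $F$, and using $r^i r^{\pm a}=r^{i\pm a}$ and $sr^i r^{\pm a}=sr^{i\pm a}$, the rotation-edges inside $R$ form exactly $E(\mathrm{Cay}(\mathbb Z_n,T))$ with $T=\{\pm a\}$, and likewise inside $F$. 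Thus the generators $r^{\pm a}$ contribute precisely two vertex-disjoint copies of $\mathrm{Cay}(\mathbb Z_n,T)$, one on each layer.

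Then, following the proof of Proposition~\ref{Proposition 4.2}, for a reflection generator $sr^{b_j}$ and $g=r^i\in R$ we have $r^i\cdot sr^{b_j}=sr^{-i}r^{b_j}=sr^{b_j-i}\in F$, giving the edge $\{r^i,sr^{b_j-i}\}\in M_{b_j}$; and for $g=sr^i\in F$ we get $sr^i\cdot sr^{b_j}=r^{b_j-i}\in R$, which upon setting $i'=b_j-i$ is the same edge $\{r^{i'},sr^{b_j-i'}\}\in M_{b_j}$. Conversely every edge of $M_{b_j}$ arises as $\{r^i,r^i(sr^{b_j})\}$. Hence the generator $sr^{b_j}$ contributes exactly the inter-layer set $M_{b_j}$, which is a perfect matching between $R$ and $F$ by Proposition~\ref{Proposition 4.2}(1), and $M_{b_1},M_{b_2}$ are edge-disjoint by Proposition~\ref{Proposition 4.2}(2) since $b_1\not\equiv b_2\pmod n$.

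Finally, since every edge of a Cayley graph has the form $\{g,gx\}$ with $g\in D_{2n}$ and $x\in S$, and the three generator-classes $\{r^{\pm a\}}$, $\{sr^{b_1}\}$, $\{sr^{b_2}\}$ contribute respectively the two circulant layers, $M_{b_1}$, and $M_{b_2}$, we conclude $E(\Gamma)=E(\mathrm{Cay}(\mathbb Z_n,T))\cup E(\mathrm{Cay}(\mathbb Z_n,T))\cup M_{b_1}\cup M_{b_2}$, and that these four edge sets are pairwise disjoint. This argument is essentially bookkeeping; the only point needing a little care is exhaustiveness-with-disjointness, namely that no intra-layer edge is also produced by a reflection generator (immediate, since matching edges are inter-layer) and that $M_{b_1}\neq M_{b_2}$ (immediate from $b_1\not\equiv b_2\pmod n$ via Proposition~\ref{Proposition 4.2}(2)).
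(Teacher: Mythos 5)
Your proof is correct and takes essentially the same route as the paper's: classify edges by whether the generator is a rotation or a reflection, identify the intra-layer edges on $R$ and $F$ with two copies of $\mathrm{Cay}(\mathbb{Z}_n,T)$, and check that each reflection generator $sr^{b_j}$ contributes exactly the inter-layer matching $M_{b_j}$, so that $E(\Gamma)$ is the stated union. The additional bookkeeping you include (edges issued from reflection vertices landing back in $M_{b_j}$, and edge-disjointness of $M_{b_1}$ and $M_{b_2}$) only elaborates on the same argument.
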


\begin{proof}
Firstly, $R_{n}$ and $F_{n}$ partition $D_{2n}$ into the rotation and reflection cosets. 
For $i \in \mathbb{Z}_n$,
$\{ r^i, r^{i+a} \}$ and $\{ r^i, r^{i-a}\}$
are edges of $\Gamma[R_{n}]$, so $\Gamma[R_{n}] \cong \mathrm{Cay}(\mathbb{Z}_n, \{\pm a\})$. Similarly, $\Gamma[F_{n}]\cong \mathrm{Cay}(\mathbb{Z}_n, \{\pm a\})$.
For $j=1,2$, each reflection $sr^{b_j}$ pairs $r^i$ with 
$r^{i}(sr^{b_j})=(r^{i}s)r^{b_{j}}=(sr^{-i})r^{b_{j}}=sr^{b_{j}-i}$,
producing the edge set
$M_{b_j} = \{ \{ r^i, sr^{\,b_j-i}\} : i \in \mathbb{Z}_n\}$.
Thus,
$E(\Gamma) = E(\Gamma[R_{n}]) \cup E(\Gamma[F_{n}]) \cup M_{b_1} \cup M_{b_2}$.
 \end{proof}

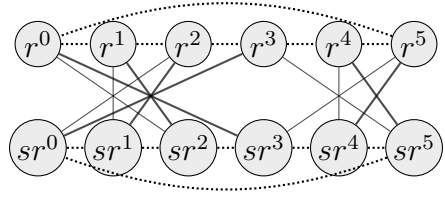
\begin{figure}[h]
\centering
\begin{tikzpicture}[
    scale=0.55,
    vertex/.style={circle, draw=black, fill=gray!15, minimum size=6mm, inner sep=1pt},
    rot_edge/.style={thick, black, densely dotted},
    ref1_edge/.style={thin, black, opacity=0.7},
    ref2_edge/.style={thick, black, opacity=0.7}
]

    \foreach \i in {0,...,5} {
        \node[vertex] (R\i) at (\i*1.8, 0) {$r^{\i}$};
    }

    \foreach \i in {0,...,5} {
        \node[vertex] (F\i) at (\i*1.8, -2.5) {$sr^{\i}$};
    }

    \draw[rot_edge] (R0) -- (R1) -- (R2) -- (R3) -- (R4) -- (R5);
    \draw[rot_edge, bend right=20] (R5) to (R0); 

    \draw[rot_edge] (F0) -- (F1) -- (F2) -- (F3) -- (F4) -- (F5);
    \draw[rot_edge, bend left=20] (F5) to (F0); 

    \draw[ref1_edge] (R0) -- (F2);
    \draw[ref1_edge] (R1) -- (F1);
    \draw[ref1_edge] (R2) -- (F0);
    \draw[ref1_edge] (R3) -- (F5);
    \draw[ref1_edge] (R4) -- (F4);
    \draw[ref1_edge] (R5) -- (F3);

    \draw[ref2_edge] (R0) -- (F3);
    \draw[ref2_edge] (R1) -- (F2);
    \draw[ref2_edge] (R2) -- (F1);
    \draw[ref2_edge] (R3) -- (F0);
    \draw[ref2_edge] (R4) -- (F5);
    \draw[ref2_edge] (R5) -- (F4);


\end{tikzpicture}

\caption{\em The graph $\mathrm{Cay}(D_{12},S)$ where $S=\{r,r^{-1},sr^{2},sr^{3}\}$.}
\end{figure}

\begin{prop}\label{Proposition 6.1}
{\em Suppose $n$ is even, and there exist integers $a,b \in \mathbb{Z}_n$ such that
\begin{center}
$S=\{r^a,\, r^{-a},\, r^{n/2},\, sr^b\}$     
\end{center}

where $a\not\equiv 0, n/2 \pmod n$.
Let $\Gamma=Cay(D_{2n},S)$,
$T=\{\pm a,\, n/2\} \subset \mathbb{Z}_n$, and
$M_b=\{\{r^i,\, sr^{b-i}\} : i \in \mathbb{Z}_n\}$.
Then $E(\Gamma)= E(\mathrm{Cay}(\mathbb{Z}_n, T)) \cup E(\mathrm{Cay}(\mathbb{Z}_n, T)) \cup M_b$. 
In particular, $\Gamma$ is formed by two
isomorphic circulant graphs connected by a single inter-layer perfect matching.
}
\end{prop}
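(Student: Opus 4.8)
The plan is to mirror the analyses of Propositions \ref{Proposition 3.1} and \ref{Proposition 5.1}, since the underlying coset decomposition is identical; only the bookkeeping of the rotation exponents changes. First I would pin down the shape of $S$. Because $S=S^{-1}$ and a rotation $r^{c}$ is self-inverse precisely when $2c\equiv 0\pmod n$, i.e. $c\equiv 0$ or $c\equiv n/2$ (the latter requiring $n$ even), the non-self-inverse rotations in $S$ pair off under inversion, so their number is even. An odd total of three rotations therefore forces an odd number of self-inverse rotations among them, hence exactly one; since $e\notin S$, this rotation must be $r^{n/2}$, which exists only when $n$ is even. The remaining two rotations form an inverse pair $\{r^{a},r^{-a}\}$, and distinctness of the three rotations together with $e\notin S$ gives $a\not\equiv 0,n/2\pmod n$. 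Thus $S=\{r^{a},r^{-a},r^{n/2},sr^{b}\}$ for some $a,b\in\mathbb{Z}_n$ with $a\not\equiv 0,n/2$.

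Next I would use the partition $V(\Gamma)=D_{2n}=R\cup F$. Right multiplication by a rotation fixes each of the cosets $R$ and $F$, whereas right multiplication by a reflection interchanges them; consequently the three rotation generators $r^{\pm a},r^{n/2}$ produce only intra-layer edges and the reflection generator $sr^{b}$ produces only inter-layer edges. Identifying $R$ with $\mathbb{Z}_n$ via $r^{i}\mapsto i$, the edges of the induced subgraph $\Gamma[R]$ are exactly the pairs $\{i,i+t\}$ with $t\in T:=\{a,-a,n/2\}$, so $\Gamma[R]\cong\mathrm{Cay}(\mathbb{Z}_n,T)$; here $T$ has three distinct elements, since $a\not\equiv -a$ (as $a\not\equiv 0,n/2$) and $\pm a\not\equiv n/2$. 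The same argument with $sr^{i}\mapsto i$ identifies $\Gamma[F]$ with $\mathrm{Cay}(\mathbb{Z}_n,T)$, using $sr^{i}\cdot r^{t}=sr^{i+t}$.

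For the inter-layer part, the generator $sr^{b}$ sends $r^{i}\mapsto r^{i}sr^{b}=(sr^{-i})r^{b}=sr^{b-i}$ and, symmetrically, $sr^{j}\mapsto sr^{j}\cdot sr^{b}=r^{b-j}$, so every edge contributed by $sr^{b}$ has the form $\{r^{i},sr^{b-i}\}$; this is precisely the set $M_{b}$, a perfect matching between $R$ and $F$ by Proposition \ref{Proposition 4.2}(1) (or directly, since $i\mapsto b-i$ is a bijection of $\mathbb{Z}_n$). Combining the three contributions gives $E(\Gamma)=E(\Gamma[R])\cup E(\Gamma[F])\cup M_{b}=E(\mathrm{Cay}(\mathbb{Z}_n,T))\cup E(\mathrm{Cay}(\mathbb{Z}_n,T))\cup M_{b}$, and the stated picture — two isomorphic circulant layers joined by a single perfect matching — follows.

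There is no serious obstacle here: the argument is routine coset and edge bookkeeping already executed in the two-rotations case. The only points demanding a moment's care are (i) the elementary argument that $n$ must be even and that the lone self-inverse rotation is $r^{n/2}$, and (ii) verifying that the involution $sr^{b}$, which formally yields edges "in both directions", contributes exactly the matching $M_{b}$ and nothing more.
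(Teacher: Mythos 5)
Your proposal is correct and follows essentially the same route as the paper: partition $D_{2n}=R\cup F$, observe that the rotation generators $r^{\pm a},r^{n/2}$ yield only intra-layer edges giving $\Gamma[R]\cong\Gamma[F]\cong\mathrm{Cay}(\mathbb{Z}_n,T)$, and that $sr^{b}$ yields exactly the inter-layer matching $M_b$. The only difference is that you also spell out the parity/inverse-pair argument forcing $n$ even and $S=\{r^{\pm a},r^{n/2},sr^{b}\}$, which the paper records in its Case (IV) discussion in the preliminaries rather than inside the proof, and you explicitly verify that edges emanating from reflection vertices under $sr^{b}$ duplicate $M_b$ — a detail the paper checks in the analogous Propositions \ref{Proposition 4.2} and \ref{Proposition 6.2}.
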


\begin{proof}
For any $r^t\in R_{n}$ and any $g\in D_{2n}$, $gr^t$ and $g$ belong to the same coset, and for any $sr^u\in F_{n}$ and any $g\in D_{2n}$, $g(sr^u)$ and $g$ belongs to the opposite coset. Thus, the generators $r^a, r^{-a}$, and $r^{n/2}$ produce only intra-layer edges. In particular, for every $i\in \mathbb{Z}_n$,
\begin{center}
$\{r^i, r^{i\pm a}\}, \{r^i, r^{i+n/2}\}, \{sr^i, sr^{i\pm a}\}, \{sr^i, sr^{i+n/2}\} \in E(\Gamma)$.
\end{center}
Therefore, $\Gamma[R_{n}] \cong \Gamma[F_{n}] \cong \mathrm{Cay}(\mathbb{Z}_n,T)$.
Furthermore, $sr^b \in S$ produces the inter-layer edges. For each $i\in \mathbb{Z}_n$,
$r^i (sr^b) = sr^{b-i} \in F$,
so the edges arising from $sr^b$ are 
$\{r^i, sr^{b-i}\}$ for $i\in \mathbb{Z}_n$.
The set $M_b=\{\{r^i, sr^{b-i}\}: i\in \mathbb{Z}_n\}$ is a perfect matching.
Consequently,
$E(\Gamma )=E(\Gamma [R_{n}])\cup E(\Gamma [F_{n}])\cup M_{b}$. As $\Gamma [R_{n}]\cong \mathrm{Cay}(\mathbb{Z}_{n},T)$ and $\Gamma [F_{n}]\cong \mathrm{Cay}(\mathbb{Z}_{n},T)$, the graph structure can be described as two identical circulant graphs connected by a perfect matching.
 \end{proof}
\begin{prop}\label{Proposition 6.2}
{\em 
Suppose \(n\) is even and there exist distinct integers \(a_1,a_2,a_3\in\mathbb{Z}_n\) such that 
$S=\{sr^{a_1},\; sr^{a_2},\; sr^{a_3},\; r^{n/2}\}$.
Let \(\Gamma=\mathrm{Cay}(D_{2n},S)\).
For \(j=1,2,3\), define $M_{a_j}=\bigl\{\{r^i,\,sr^{a_j-i}\}:i\in\mathbb{Z}_n\bigr\}$,
$N_{R_{n}}=\bigl\{\{r^i,r^{\,i+n/2}\}:i\in\mathbb{Z}_n\bigr\}$, and $N_{F_{n}}=\bigl\{\{sr^i,sr^{\,i+n/2}\}:i\in\mathbb{Z}_n\bigr\}$.
Then $E(\Gamma) = N_{R_{n}}\cup N_{F_{n}}\cup M_{a_1}\cup M_{a_2}\cup M_{a_3}$.}
\end{prop}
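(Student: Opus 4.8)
The proof will proceed exactly along the lines of Propositions~\ref{Proposition 5.1} and~\ref{Proposition 6.1}: first pin down the shape of $S$ forced by symmetry, then read off the edges one generator at a time using the coset partition $D_{2n}=R\cup F$.

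\emph{Step 1 (shape of $S$).} Write the unique rotation in $S$ as $r^{c}$ with $c\in\mathbb{Z}_n\setminus\{0\}$ (using $e\notin S$). Since $S=S^{-1}$ and $S$ contains no other rotation, we must have $r^{-c}=r^{c}$, i.e. $2c\equiv 0\pmod n$; the only nonzero solution is $c=n/2$, which exists precisely when $n$ is even. Hence $n$ is even and the rotation is $r^{n/2}$. The three reflections are pairwise distinct involutions, so writing them as $sr^{a_1},sr^{a_2},sr^{a_3}$ yields pairwise non-congruent $a_1,a_2,a_3\in\mathbb{Z}_n$, and $S=S^{-1}$ holds automatically. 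This gives the claimed form $S=\{sr^{a_1},sr^{a_2},sr^{a_3},r^{n/2}\}$.

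\emph{Step 2 (edges from each generator).} Right multiplication by a rotation fixes each coset $R$, $F$, while right multiplication by a reflection interchanges them; hence $r^{n/2}$ contributes only intra-layer edges and each $sr^{a_j}$ only inter-layer edges. For $r^{n/2}$: from a rotation vertex we obtain $\{r^i,r^{\,i+n/2}\}$ and from a reflection vertex $\{sr^i,sr^{\,i+n/2}\}$; since $r^{n/2}$ has order $2$, the involution $i\mapsto i+n/2$ of $\mathbb{Z}_n$ is fixed-point free, so $N_R$ and $N_F$ are genuine perfect matchings on $R$ and $F$, and they are exactly the edges produced by $r^{n/2}$. For $sr^{a_j}$: from $r^i$ we get $r^i\cdot sr^{a_j}=sr^{a_j-i}$, hence the edge $\{r^i,sr^{a_j-i}\}\in M_{a_j}$; from $sr^i$ we get $sr^i\cdot sr^{a_j}=r^{a_j-i}$, and since $sr^{a_j-(a_j-i)}=sr^i$ this is the edge $\{r^{a_j-i},sr^{a_j-(a_j-i)}\}\in M_{a_j}$. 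Conversely every edge $\{r^i,sr^{a_j-i}\}$ of $M_{a_j}$ arises by multiplying $r^i$ by $sr^{a_j}\in S$. Thus the edges coming from $sr^{a_j}$ are precisely $M_{a_j}$, which is a perfect matching by Proposition~\ref{Proposition 4.2}(1).

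\emph{Step 3 (assembling).} Since the contribution of every generator has been identified, $E(\Gamma)=N_R\cup N_F\cup M_{a_1}\cup M_{a_2}\cup M_{a_3}$, which is the asserted description: the two circulant layers $N_R$, $N_F$ (each a copy of $\mathrm{Cay}(\mathbb{Z}_n,\{n/2\})$ on $R$ and on $F$) joined by the three inter-layer matchings $M_{a_1},M_{a_2},M_{a_3}$. Moreover $N_R,N_F$ are disjoint from the $M_{a_j}$ because the former are intra-layer and the latter inter-layer, and $M_{a_j},M_{a_\ell}$ are edge-disjoint for $j\neq\ell$ by Proposition~\ref{Proposition 4.2}(2) together with $a_j\not\equiv a_\ell\pmod n$; so the union is in fact disjoint. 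I do not expect a genuine obstacle here: the content is entirely the coset bookkeeping of Step~2, and the only points that need care are that $r^{n/2}$ is a true involution (so $N_R,N_F$ are matchings rather than loops) and that the two computations of each $sr^{a_j}$-edge, starting from a rotation versus a reflection vertex, land in the same matching $M_{a_j}$.
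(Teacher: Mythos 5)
Your proof is correct and follows essentially the same route as the paper: the same coset bookkeeping ($R$ versus $F$) and the same generator-by-generator computations $r^i\cdot sr^{a_j}=sr^{a_j-i}$, $sr^i\cdot sr^{a_j}=r^{a_j-i}$ and $g\cdot r^{n/2}$ yielding $M_{a_1},M_{a_2},M_{a_3},N_R,N_F$. The only additions are your explicit Step 1 deriving $n$ even and the rotation $r^{n/2}$ from $S=S^{-1}$ (which the paper settles in its Case (V) classification in the Preliminaries) and your remark on the disjointness of the matchings, both of which are fine.
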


\begin{proof} 
For \(j\in \{1,2,3\}\), we consider the action of \(sr^{a_{j}}\) and $r^{n/2}$ on vertices of $\Gamma$.
\begin{itemize}
    \item For \(i\in \mathbb{Z}_{n}\), we have
    \(r^{i} (sr^{a_{j}})=r^{i}(r^{-a_{j}}s)=r^{i-a_{j}}s=sr^{a_{j}-i}\).    
     So the edges produced by \(sr^{a_{j}}\) on rotation vertices are of the form \(\{r^{i},sr^{a_{j}-i}\}\). These edges form the perfect matching \(M_{a_{j}}\) between \(R_{n}\) and \(F_{n}\).

     \item For $y\in \mathbb{Z}_{n}$, we have
       \((sr^{y}) (sr^{a_{j}})=r^{a_{j}-y}\). Thus the edges produced by the generator \(sr^{a_{j}}\) on reflection vertices are of the form \(\{sr^{y},r^{a_{j}-y}\}\) for $y\in \mathbb{Z}_{n}$. If we set \(i=a_{j}-y\), then 
       $\{sr^{y},r^{a_{j}-y}\}=\{sr^{a_{j}-i},r^{i}\}\in M_{a_{j}}$.
       Thus, the edges generated by \(sr^{a_{j}}\) acting on reflection vertices are already defined in \(M_{a_{j}}\).
       

     \item For rotation \(r^{n/2}\), since \((r^{n/2})^2=e\) we have for each \(i\),
     $r^i r^{n/2}=r^{\,i+n/2}\in R_{n} \text{ and } (sr^i) r^{n/2}=sr^{\,i+n/2}\in F_{n}$, so \(r^{n/2}\) induces \(N_{R_{n}}\) on \(R_{n}\) and \(N_{F_{n}}\) on \(F_{n}\).
\end{itemize}

The set of edges in \(\Gamma \) is the union of the matchings induced by each generator in \(S\). The reflection generators \(sr^{a_{i}}\), $1\leq i\leq 3$ induce $M_{a_{i}}$. The rotation generator \(r^{n/2}\) induces \(N_{R_{n}}\) and \(N_{F_{n}}\).
Thus,
$E(\Gamma)=N_{R_{n}}\cup N_{F_{n}}\cup M_{a_1}\cup M_{a_2}\cup M_{a_3}$.
 \end{proof}
\section{Acknowledgements} 
The author would like to thank the EK\"{O}P-24-4-II-ELTE-996 University Excellence scholarship program for the financial support.

%
%


\begin{thebibliography}{99}

\bibitem{ASE2019}
Ahmad Fadzil, A. F., Sarmin, N. H., Erfanian,  A.: 
\newblock {The Energy of Cayley Graphs for a Generating Subset of the Dihedral Groups},
\newblock {MATEMATIKA: MJIAM}  
\newblock \textbf{35}(3),
\newblock 371–376
\newblock (2019)
\newblock doi: \url{https://doi.org/10.11113/matematika.v35.n3.1115}

\bibitem{EP2005}
Evdokimov, S., Ponomarenko, I.:
\newblock {A New Look at the Burnside–Schur Theorem},
\newblock {Bull. Lond. Math. Soc.}  
\newblock \textbf{37}(4),
\newblock 535-546
\newblock (2005)
\newblock doi: \url{ https://doi.org/10.1112/S0024609305004340}

\bibitem{KE2021}
Kaseasbeh, S. AL., Erfanian, A.:
\newblock {The structure of Cayley graphs of dihedral groups of Valencies 1, 2 and 3},
\newblock {Proyecciones (Antofagasta)}  
\newblock \textbf{40}(6),
\newblock 1683-1691
\newblock (2021)
\newblock doi: 
\url{https://doi.org/10.22199/issn.0717-6279-4357-4429}

\bibitem{HHL2017}
Huang, X., Huang, Q., Lu, L.:
\newblock {Automorphism Groups of a Class of Cubic Cayley Graphs on Symmetric Groups},
\newblock {Algebra Colloq.}  
\newblock \textbf{24}(4),
\newblock 541-550 
\newblock (2017)
\newblock doi: \url{https://doi.org/10.1142/S1005386717000359}

\bibitem{Kon2020}
Kong, X.:
\newblock {Automorphism Groups of Cubic Cayley Graphs of Dihedral Groups of Order $2^{n}p^{m}$ ($n \geq 2$ and $p$ Odd Prime)},
\newblock {JAMP}  
\newblock \textbf{8}(12),
\newblock 3075–3084
\newblock (2020)


\bibitem{KO2006}
Kwak, J.H., Oh, J.M.:
\newblock {One-regular normal Cayley graphs on dihedral groups of valency 4 or 6 with cyclic vertex stabilizer},
\newblock {Acta Math. Sinica} 
\newblock \textbf{22},
\newblock 1305–1320
\newblock (2006)


\bibitem{LX2003}
Lu, Z.P., Xu, M.Y.:
\newblock {On the normality of Cayley graphs of order $pq$},
\newblock {Australas. J. Combin.} 
\newblock \textbf{27}, 
\newblock 81–93
\newblock (2003)

\bibitem{WZ2006}
Wang, C.Q., Zhou, Z.Y.:
\newblock {One-Regularity of 4-Valent and Normal Cayley Graphs of Dihedral Groups},
\newblock {Acta Math. Sinica, Chin. Ser.} 
\newblock \textbf{49}(3), 
\newblock 669–678
\newblock (2006)


\bibitem{WX2006}
Wang, C., Xu, M.:
\newblock {Non-normal one-regular and 4-valent Cayley graphs of dihedral groups $D_{2n}$},
\newblock {Eur. j. comb.}  
\newblock \textbf{27}(5),
\newblock 750-766
\newblock (2006)


\bibitem{ZF2007}
Zhou, C., Feng, Y.Q.:
\newblock {Automorphism Groups of Connected Cubic Cayley Graphs of Order $4p$},
\newblock {Algebra Colloq.}  
\newblock \textbf{14}(2),  
\newblock 351-359
\newblock (2007)





\end{thebibliography}
\end{document}